\newtheorem{thm}{Theorem}[section]
\newtheorem{lem}[thm]{Lemma}
\newtheorem{prop}[thm]{Proposition}
\newtheorem{cor}[thm]{Corollary}
\theoremstyle{definition}
\newtheorem{dfn}[thm]{Definition}
\newtheorem{rem}[thm]{Remark}
\newtheorem{ex}[thm]{Example}
\theoremstyle{remark}
\newtheorem*{ac}{Acknowledgments}
\newtheorem*{conv}{Convention}
\newtheorem*{proof of claim}{Proof of Claim}
\numberwithin{equation}{thm}
\def\ann{\operatorname{Ann}}
\def\cm{\mathsf{CM}}
\def\cok{\operatorname{Coker}}
\def\depth{\operatorname{depth}}
\def\Ext{\operatorname{Ext}}
\def\ge{\geqslant}
\def\height{\operatorname{ht}}
\def\Hom{\operatorname{Hom}}
\def\image{\operatorname{Im}}
\def\ker{\operatorname{Ker}}
\def\le{\leqslant}
\def\m{\mathfrak{m}}
\def\P{\mathbb{P}}
\def\p{\mathfrak{p}}
\def\pd{\operatorname{pd}}
\def\Q{\mathbb{Q}}
\def\q{\mathfrak{q}}
\def\spec{\operatorname{Spec}}
\def\supp{\operatorname{Supp}}
\def\Tor{\operatorname{Tor}}
\def\V{\mathrm{V}}
\def\gor{\mathsf{Gor}}
\def\fid{\mathsf{FID}}
\def\PD{\mathsf{PD}}
\def\reg{\mathsf{Reg}}
\def\id{\operatorname{id}}
\def\D{\mathrm{D}}
\def\Z{\mathbb{Z}}
\begin{document}
\allowdisplaybreaks
\title{Asymptotic behavior of homological invariants of localizations of modules}
\author{Kaito Kimura}
\address{Graduate School of Mathematics, Nagoya University, Furocho, Chikusaku, Nagoya 464-8602, Japan}
\email{m21018b@math.nagoya-u.ac.jp}
\thanks{2020 {\em Mathematics Subject Classification.} 13D07, 13D05, 13A30, 13D40.}
\thanks{{\em Key words and phrases.} asymptotic behavior, Bass number, Betti number, injective dimension, projective dimension, openness of loci, graded module.}
\thanks{The author was partly supported by Grant-in-Aid for JSPS Fellows Grant Number 23KJ1117.}
\begin{abstract}
Let $R$ be a commutative noetherian ring, $I$ an ideal of $R$, and $M$ a finitely generated $R$-module.
We consider the asymptotic injective dimensions, projective dimensions, Bass numbers, and Betti numbers of localizations of $M/I^n M$ at prime ideals of $R$ and prove that these invariants are stable or have polynomial growth for large integers $n$ that do not depend on the prime ideals.
\end{abstract}
\maketitle
\section{Introduction}

Throughout the present paper, all rings are assumed to be commutative and noetherian.
Let $R$ be a ring, $I$ an ideal of $R$, and $M$ a finitely generated $R$-module.
The asymptotic behavior of the quotient modules $M/I^n M$ of $M$ for large integers $n$ has been actively studied in commutative algebra.
Brodmann \cite{Br} proved that for any ideal $J$ of $R$, the grade of $J$ on $M/I^n M$ is stable for large $n$ depending on $J$.
In particular, when $R$ is local, the depth of $M/I^n M$ attains a stable constant value for all large $n$.
Using the openness of loci of modules, it is shown in \cite{Ki} under several assumptions that there exists an integer $k$ such that for all integers $n\ge k$ and all prime ideals $\p$, $\depth (M/I^n M)_\p=\depth (M/I^k M)_\p$.
In this situation, the grade of $J$ on $M/I^n M$ is stable for all $n\ge k$ and all ideals $J$ of $R$.
The difference from Brodmann's result above is that the integer $k$ is independent of $J$.
Such an integer $k$ exists if $R$ is an excellent ring, or a homomorphic image of a Cohen--Macaulay ring, or a semi-local ring.

The depth of a module is defined as the infimum of non-vanishing Bass numbers.
The injective dimension is the other important invariant defined as the supremum of non-vanishing Bass numbers.
As with the depth, the asymptotic stability of the injective dimension of $M/I^n M$ is known when $R$ is local.
Moreover, for all finitely generated $R$-modules $N$ and all integers $i$, the lengths of the modules $\Ext_{R}^i(N,M/I^n M)$ have polynomial growth for all large $n$ whenever they are finite; see \cite{Ko, T}.
The same holds dually for projective dimensions and lengths of Tor modules.

In this paper, we study the asymptotic injective dimensions, projective dimensions, Bass numbers, and Betti numbers of localizations of $M/I^n M$ at prime ideals of $R$.
For large integers $n$ that do not depend on the prime ideals, we consider whether these invariants are stable or have polynomial growth.
The first main result of the present paper is the following theorem; for the definition of an acceptable ring in the sense of Sharp \cite{S} see Definition \ref{def rings}.
Similar results exist for projective dimension; see Corollary \ref{main thm of pd}.

\begin{thm}[Corollaries \ref{main cor of fid} and \ref{main cor of fid local}]\label{main1 id stable}
Put $\bar{R}=R/(I+\operatorname{Ann}_R (M))$.
Then there is $k>0$ such that
$$
\id_{R_\p} (M/I^n M)_\p=\id_{R_\p} (M/I^k M)_\p
$$ 
for all integers $n\ge k$ and all prime ideals $\p$ of $R$ in each of the following cases.
\begin{enumerate}[\rm(1)]
\item $M$ or $M/I^n M$ has finite injective dimension for some $n>0$.
\item $\bar{R}$ is acceptable. (e.g. $\bar{R}$ is excellent or a homomorphic image of a Gorenstein ring.)
\item $\bar{R}$ is semi-local.
\end{enumerate}
\end{thm}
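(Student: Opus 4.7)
The plan is to deduce Theorem~\ref{main1 id stable} from three ingredients: pointwise asymptotic stability of $\id_{R_\p}(M/I^n M)_\p$ over the local ring $R_\p$ (the statement cited in the introduction from \cite{Ko,T}), Bass's theorem pinning down the stable finite value, and the uniform-in-$\p$ depth stability of $M/I^n M$ from \cite{Ki}. For each $\p \in \V(I+\ann_R M) = \spec \bar R$, the pointwise result supplies an integer $k(\p)$ beyond which $\id_{R_\p}(M/I^n M)_\p$ is constant; when this common value is finite, Bass's theorem identifies it with $\depth R_\p$, a function of $\p$ alone. What remains is to make $k(\p)$ bounded uniformly in $\p$.

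Set $L_n := \{\p \in \spec \bar R : \id_{R_\p}(M/I^n M)_\p < \infty\}$. The theorem will follow once I show that $L_n$ stabilizes to a common locus $L_\infty$ with an $n$-threshold independent of $\p$: on $L_\infty$ the stable value agrees with $\depth R_\p$, and the depth stability result of \cite{Ki} provides a uniform $k$ beyond which $\depth_{R_\p}(M/I^n M)_\p = \depth R_\p$; off $L_\infty$ the stable value is $\infty$, which is itself stable. Thus the problem splits into controlling the sequence of loci $L_n$ and invoking the depth result on $L_\infty$.

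In case (3), $\bar R$ has only finitely many maximal ideals, so I take $k$ to be the maximum of $k(\m)$ over these, exploiting that the injective dimension of a finitely generated module over a Noetherian ring is detected after localizing at maximal ideals. In case (1), the finite-$\id$ hypothesis should force $L_n = \spec \bar R$ for all large $n$, reducing the problem to the depth stability of \cite{Ki} via Bass's theorem. In case (2), I would invoke openness of the finite-injective-dimension locus for finitely generated modules over an acceptable ring (the substance of the general Corollary \ref{main cor of fid} referenced by the theorem) and use Noetherian induction on $\spec \bar R$ to promote pointwise stabilization to uniform stabilization of the $L_n$. The main obstacle is case (2): establishing openness of $L_n$ in the acceptable setting and orchestrating the Noetherian-induction bookkeeping so that the resulting $k$ is a single integer independent of $\p$. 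Cases (1) and (3) are, in comparison, direct reductions to known stability statements.
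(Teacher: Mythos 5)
Your reduction of the theorem to uniform stabilization of the loci $\fid_R(M/I^n M)$ is correct and is exactly what Lemma~\ref{id sum}(2) records, but the extra invocation of depth stability from \cite{Ki} is a confusion: once $\p\in\fid_R(M/I^n M)$ and $(M/I^n M)_\p\ne 0$, Bass's theorem gives $\id_{R_\p}(M/I^n M)_\p=\depth R_\p$ directly, with no reference to $\depth(M/I^n M)_\p$, so the value on the stable finite-$\id$ locus is automatically constant in $n$. The serious gaps are in (1) and (3). For (1), $\id_R M<\infty$ does \emph{not} force $\fid_R(M/I^n M)=\spec\bar{R}$ for large $n$: take $R$ a non-regular Gorenstein local ring of positive dimension, $M=R$, $I=\m$; then $\id_R(R/\m^n)=\infty$ for every $n\ge 1$ (finite injective dimension is equivalent to finite projective dimension over a Gorenstein ring, and $R/\m^n$ has infinite projective dimension when $R$ is not regular and $\m^n\ne 0$), so the locus is empty while $\spec\bar{R}=\{\m\}$. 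The theorem is still true in this example, but for the opposite reason, and the paper's actual use of the hypothesis is indirect: $\fid_R(M)=\spec R$ implies via Theorem~\ref{fid locus extension} that $\gor(R/\q)$ contains a nonempty open subset of $\spec(R/\q)$ for the relevant primes $\q$, which is the hypothesis of Theorem~\ref{main thm of fid}.

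For (3), taking $k=\max_{\m} k(\m)$ over the finitely many maximal ideals of $\bar{R}$ is insufficient, because the conclusion must hold at \emph{every} prime $\p$, of which there are infinitely many even when $\bar{R}$ is local, and the pointwise stabilization index from \cite{Ko,T} is a priori unbounded as $\p$ ranges over the non-maximal primes. The stability of $\fid$ under generalization cuts only one way ($\m\in\fid_R$ forces the whole spectrum in), not the way your argument needs. In the paper, (3) is in fact the hardest case and does \emph{not} reduce easily to (2): it goes through the polynomial-growth theorem for Bass numbers $\mu^s(\p,M/I^n M)$ uniformly in $\p$ (Theorem~\ref{finite poly bass local}), obtained by passing to the completion, applying the acceptable-ring result there (Corollary~\ref{finite poly bass acceptable}), and descending via the Foxby--Thorup formula (Lemma~\ref{FT bass}). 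Your sketch for case (2) is the one closest to the paper's route, but as you acknowledge, the openness of $\fid$-loci over acceptable rings (Theorem~\ref{fid locus extension}, Corollary~\ref{cor fid locus2}) and the stabilization of the loci of homogeneous components (Proposition~\ref{homog comp fid locus}, Theorem~\ref{main thm of fid}) constitute the bulk of the argument, and the proposal does not supply them.
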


Obviously, we may replace all $\bar{R}$ in the above result with $R$.
In the proofs of the former two cases, we use the method developed in \cite{Ki} (see also \cite{RS}).
Using the openness of the finite injective dimension locus of a graded module, we give the asymptotic stability of the loci of the homogeneous components of the associated graded module $\bigoplus_{i\ge 0}I^i M/I^{i+1} M$, and $M/I^n M$.
To implement this argument, the theorems in \cite{K} about the openness of finite injective dimension loci need to be improved.
Also, some proofs require a different argument from the ones in \cite{Ki}.
In case (3), the claim is shown by attributing to the case where $R$ is a complete local ring, but it does not follow immediately from the case (2).
So first, we prove (1) of the theorem below for the case (i).
From the results, we can provide (ii) of Theorem \ref{main2 finite poly bass and betti}(1), and so (3) of Theorem \ref{main1 id stable}.

\begin{thm}[Corollary \ref{finite poly bass acceptable}, Theorems \ref{finite poly bass local} and \ref{finite poly betti}]\label{main2 finite poly bass and betti}
Let $s\ge 0$ and $\bar{R}=R/(I+\operatorname{Ann}_R (M))$.
\begin{enumerate}[\rm(1)]
\item Suppose either that {\rm(i)} $\bar{R}$ is acceptable and of finite dimension or that {\rm(ii)} $\bar{R}$ is a semi-local ring.
Then there exist polynomials $\varphi_1, \ldots, \varphi_l \in\Q[x]$ and $k>0$ such that the following condition is satisfied:
for any prime ideal $\p$ of $R$, there exists $1\le j\le l$ such that $\mu^{s} (\p,M/I^n M)=\varphi_j (n)$ for all $n\ge k$.
\item There exist polynomials $\varphi_1, \ldots, \varphi_l \in\Q[x]$ and $k>0$ such that the following condition is satisfied:
for any prime ideal $\p$ of $R$, there exists $1\le j\le l$ such that $\beta_{s} (\p,M/I^n M)=\varphi_j (n)$ for all $n\ge k$.
\end{enumerate}
\end{thm}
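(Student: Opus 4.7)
The plan is to handle the Betti statement (2) first, since it requires no hypotheses on $\bar R$, and then to derive the Bass statement (1) case by case. Both parts combine pointwise polynomial growth of the relevant invariant in $n$ with a finiteness-of-polynomials statement arising from a single finitely generated bigraded object, which enforces uniformity across $\spec R$.

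For part (2), I would work with the Rees module $\bigoplus_{n\ge 0} I^n M$, viewed as a finitely generated graded module over the Rees algebra $\bigoplus_{n\ge 0} I^n$. For a prime $\p$, the graded object $\bigoplus_n \Tor_s^R(R/\p, I^n M)$ is a finitely generated graded module over the Rees algebra annihilated by $\p$, so after localizing at $\p$ its Hilbert function in $n$ is eventually polynomial by the standard theory of graded modules over noetherian graded algebras. Using the long exact sequences attached to $0 \to I^n M \to M \to M/I^n M \to 0$, I would transfer this polynomial behavior to $\beta_s(\p, M/I^n M)$. The shifts and generator count of a fixed graded free resolution of the Rees module (chosen once, independently of $\p$) bound both the degree and the leading coefficient of the resulting polynomials, so only finitely many $\varphi_1, \ldots, \varphi_l$ appear and $k$ can be chosen uniformly.

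For part (1) case (i), with $\bar R$ acceptable and of finite dimension, I would combine the above Rees-module analysis with the improved openness of the finite injective dimension locus developed earlier in the paper (strengthening \cite{K}). Stratifying $\spec R$ by constructible Bass-number loci and invoking openness of $\fid$ loci to stabilize the stratification for $n \gg 0$, the problem reduces on each stratum to a single polynomial identity in $n$, again deduced from finite generation of a bigraded $\Ext$-type object. For case (ii), with $\bar R$ semi-local, I would reduce to the complete local case by completing at each of the finitely many maximal ideals of $\bar R$: each completion is a homomorphic image of a regular local ring, hence acceptable of finite dimension, so case (i) applies at each factor. Faithful flatness of completion preserves Bass numbers at primes of $R$, and unioning the finitely many polynomial families produced at each maximal ideal yields the required finite list.

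The main obstacle I foresee is securing the uniform threshold $k$ in the Bass case (i). Pointwise polynomial growth of $\mu^s(\p, M/I^n M)$ in $n$ is classical (by the results cited in the introduction), but its onset depends a priori on $\p$. The resolution is to avoid any prime-by-prime analysis and instead extract the polynomial description from a single bigraded object of finite type; the improved openness of $\fid$ loci is precisely the tool that makes this uniform extraction possible, and without it the pointwise polynomials need not assemble into a finite universal family.
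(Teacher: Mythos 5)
Your proposal captures the right general philosophy (extract everything from a single finitely generated graded object over the Rees algebra and exploit noetherianity to enforce uniformity), but two of your key steps do not go through, and they are exactly the places where the paper invests the most work.

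First, for part (2), the finiteness of the polynomial family cannot be deduced from ``the shifts and generator count of a fixed graded free resolution of the Rees module bound both the degree and the leading coefficient.'' Bounded degree and bounded leading coefficient still allow infinitely many integer-valued polynomials, since the lower-order coefficients remain unconstrained. The paper's actual mechanism for finiteness is a noetherian-stratification argument: one climbs a strictly ascending chain of open subsets of $\spec(R)$ (which must terminate), attaching a single polynomial to each new locally closed stratum. The claim that drives the climb is established via Corollary \ref{open bass and betti}, which in turn rests on generic freeness of $\bigoplus_n \Ext^i_R(R/\p,M/I^nM)$ over $R/\p$ (Lemma \ref{free graded ext and tor}) and a change-of-rings spectral sequence (Lemma \ref{sp seq bass betti}) to propagate the value at $\p$ to all $\q$ in a neighborhood of $\p$ inside $\V(\p)$. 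A coefficient bound coming from a fixed resolution does not substitute for this.

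Second, for part (1)(ii), the assertion that ``faithful flatness of completion preserves Bass numbers at primes of $R$'' is false and is the crux of why this case is separated out in the paper. Under the flat map $R\to\hat R$, the Bass number at a prime $\q$ of $\hat R$ with $\p=\q\cap R$ is the convolution $\mu^n_{\hat R}(\q,\hat M)=\sum_{p+q=n}\mu^p_R(\p,M)\,\mu^q_{\hat R/\p\hat R}(\q/\p\hat R,\hat R/\p\hat R)$ (Lemma \ref{FT bass}); the fiber contributions do not vanish. The paper therefore has to invert this convolution: it inducts on $s$, applies the acceptable case (Corollary \ref{finite poly bass acceptable}) to $\hat R$, controls the formal fiber Bass numbers uniformly via Corollary \ref{finite poly bass gor cor}, and isolates the $q=0$ term (which is nonzero after choosing $\q$ minimal over $\p\hat R$, so that $\depth(\hat R/\p\hat R)_\q=0$) to solve for $\mu^s(\p,M/I^nM)$. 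Without this inversion the reduction to the complete case simply does not yield a formula for the Bass numbers downstairs.

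A smaller but real mismatch: for case (1)(i) you cite the improved openness of finite injective dimension loci from Section 2, but that result feeds into Theorem \ref{main1 id stable}, not into Theorem \ref{main2 finite poly bass and betti}. In the Bass-number argument, acceptability enters only through openness of the Gorenstein loci $\gor(R/\p)$, which is what Lemma \ref{sp seq bass betti} and Corollary \ref{open bass and betti}(1) require; the $\fid$-locus theorem plays no role here.
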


As mentioned above, the Bass and Betti numbers have polynomial growth for each prime ideal, but it is worth noting that their growths are represented by a finite number of polynomials.
The openness of loci also plays an important role in the proofs of both (1) and (2) of the above theorem.
The locus that appears in the proof of (2) is always open, while that in (1) is not always so in general.
The difference between the assumptions in (1) and (2) arises from this.

The organization of this paper is as follows. 
In Section 2, we study the openness of the finite injective dimension locus of a graded module and show Theorem \ref{main1 id stable} in the former two cases.
In Section 3, we prove Theorem \ref{main2 finite poly bass and betti} and the remaining case of Theorem \ref{main1 id stable} and consider some examples.

We close the section by stating our convention.

\begin{conv}
Let $R$ be a ring, and $M$ an $R$-module.
We denote by $\operatorname{Ann}_R (M)$ the annihilator ideal of $M$.
The injective dimension and the projective dimension of $M$ are denoted by $\id_{R}M$ and $\pd_{R}M$, respectively. 
For a property $\P$ of local rings, $\P(R)=\{\p\in\spec (R)\mid \P$ holds for $R_\p\}$ is called the $\P$-locus of $R$.
Similarly, for a property $\P$ of modules over a local ring, $\P_R (M)=\{\p\in\spec (R)\mid \P$ holds for $M_\p\}$ is called the $\P$-locus of $M$.
Let $\p$ be a prime ideal of $R$, $\kappa(\p)$ the residue field of $R_\p$, and $i\ge 0$ an integer.
Suppose that $M$ is finitely generated over $R$.
The number $\mu^{i}_R (\p,M)=\dim_{\kappa(\p)}\Ext_{R_\p}^i(\kappa(\p),M_\p)$ is called $i$-th Bass number of $M$ with respect to $\p$.
The number $\beta_{i}^R (\p,M)=\dim_{\kappa(\p)}\Tor^{R_\p}_i(\kappa(\p),M_\p)$ is called $i$-th Betti number of $M$ with respect to $\p$.
We omit subscripts/superscripts if there is no ambiguity.
\end{conv}

\section{Asymptotic injective dimensions}

In this section, we consider the necessary and sufficient condition for the finite injective dimension locus of a module to be an open set.
In that situation, we give the asymptotic stability of injective dimensions of localizations of modules.
The following notation is used in this paper.

\begin{dfn}
Let $R$ be a ring, $M$ an $R$-module, $I$ an ideal of $R$, and $f$ an element of $R$.
We set
\begin{itemize}
\item $\D_R(f) = \{\p\in\spec (R)\mid f\notin\p\}$.
\item $\D_R(I) = \{\p\in\spec (R)\mid I\nsubseteq\p\}$.
\item $\V_R(I) = \{\p\in\spec (R)\mid I\subseteq\p\}$.
\item $\fid_R (M) = \{\p\in\spec (R)\mid \id_{R_\p}M_\p<\infty \}$.
\item $\cm(R) = \{\p\in\spec (R)\mid$ The local ring $R_\p$ is Cohen--Macaulay$ \}$.
\item $\gor(R) = \{\p\in\spec (R)\mid$ The local ring $R_\p$ is Gorenstein$ \}$.
\end{itemize}
\end{dfn}

Let $R$ be a ring.
We say that a subset $U$ of $\spec (R)$ is \textit{stable under generalization} if every prime ideal $\q$ of $R$ such that $\q\subseteq \p$ for some $\p\in U$ belongs to $U$.
Note that $\cm(R)$, $\gor(R)$, and $\fid_R (M)$ are stable under generalization for any $R$-module $M$.
We state one lemma about subsets that are stable under generalization.
For a family of subsets that are stable under generalization, this lemma provides a method to determine whether they are equal.

\begin{lem}\label{Unified lemma}
Let $R$ be a ring, $\Lambda$ a set, and $n\in \mathbb{R}\cup\{\infty\}$.
Let $f:\spec(R)\times\Lambda\to \mathbb{R}\cup\{\pm \infty\}$ and $g:\spec(R)\to \mathbb{R}\cup\{\pm \infty\}$ be maps.
Put $X=\{\p\in\spec(R) \mid g(\p)<n \}$ and $X_{\lambda}=\{\p\in\spec(R) \mid f(\p, \lambda)<n \}$ for all $\lambda\in\Lambda$.
If the following three conditions hold, then $X=X_{\lambda}$ for all $\lambda\in\Lambda$.
\begin{enumerate}[\rm(1)]
\item For any prime ideal $\p$ of $R$, the equality $g(\p)={\rm sup}\{f(\p, \lambda) \mid \lambda\in\Lambda \}$ holds.
\item For any minimal element $\q$ of $\spec(R)\setminus X$, the function $f(\q,-):\Lambda\to\mathbb{R}\cup\{\pm \infty\}$ is constant.
\item  For any $\lambda\in\Lambda$, $X_{\lambda}$ is stable under generalization.
\end{enumerate}
\end{lem}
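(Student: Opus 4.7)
The plan is to prove the two inclusions $X \subseteq X_\lambda$ and $X_\lambda \subseteq X$ for an arbitrary fixed $\lambda \in \Lambda$. The forward inclusion is immediate from hypothesis (1): if $g(\p) < n$, then $f(\p, \lambda) \le \sup_{\lambda' \in \Lambda} f(\p, \lambda') = g(\p) < n$, so $\p \in X_\lambda$. Here there is nothing to do beyond unwinding the definitions.

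For the reverse inclusion I would argue by contradiction. Suppose some $\p \in X_\lambda$ fails to lie in $X$. Since $R$ is noetherian, the local ring $R_\p$ has finite Krull dimension, so the non-empty family $\{\q \in \spec(R) \setminus X : \q \subseteq \p\}$ admits a minimal element $\q$. The three hypotheses will then be combined at $\q$ to force a contradiction: by (2), the function $f(\q, -)$ is constant on $\Lambda$, with some common value $c$; by (1), $g(\q) = \sup_{\lambda' \in \Lambda} f(\q, \lambda') = c$, and $\q \notin X$ then forces $c \ge n$; on the other hand, by (3), $X_\lambda$ is stable under generalization, and $\q \subseteq \p \in X_\lambda$ yields $\q \in X_\lambda$, i.e., $c = f(\q, \lambda) < n$. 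These two conclusions contradict each other.

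The substance of the proof is really the extraction of the minimal prime $\q \subseteq \p$ outside $X$, since that is precisely the input that makes hypothesis (2) applicable; once $\q$ is in hand, everything else is a straightforward combination of the three hypotheses. I do not anticipate any serious obstacle beyond this piece of bookkeeping and the (mild) use of the noetherian hypothesis to guarantee existence of the minimal element.
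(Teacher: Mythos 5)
Your proof is correct and follows essentially the same approach as the paper: both arguments extract a minimal prime $\q \subseteq \p$ lying outside $X$ and combine hypotheses (1)--(3) at $\q$. The only cosmetic difference is that you phrase the reverse inclusion as a proof by contradiction, whereas the paper argues directly via the contrapositive.
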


\begin{proof}
It is seen by (1) that $X\subseteq X_{\lambda}$ for all $\lambda\in\Lambda$.
Let $\p\notin X$.
Since $R$ is noetherian, there is a minimal element $\q$ of $\spec(R)\setminus X$ contained in $\p$.
It follows from (1) and (2) that $f(\q,\lambda)=g(\q)\ge n$ for any $\lambda\in\Lambda$.
This means that for all $\lambda\in\Lambda$, $\q$ does not belong to $X_{\lambda}$, and thus neither does $\p$ by (3).
\end{proof}

If $X$ is open, then $\spec(R)\setminus X$ has at most a finite number of minimal elements.
Although $X_{\lambda}$ are generally infinite sets, Lemma \ref{Unified lemma} says that we can compare them using only information about a finite number of prime ideals in that situation.
In the following, we consider the openness of finite injective dimension loci.
To state Theorem \ref{fid locus extension}, we now prepare several lemmas about injective dimension.
First, we list some well-known results.

\begin{lem}\label{id sum}
Let $R$ be a ring, $L$ and $M$ finitely generated $R$-modules, and $\p$ a prime ideal of $R$.
\begin{enumerate}[\rm(1)]
\item \cite[Proposition 3.1.14]{BH} There is the equality $\id_{R_\p}M_\p={\rm sup}\{i \mid \Ext_{R_\p}^i(R_\p/\p R_\p,M_\p)\ne0 \}$.
\item \cite[Theorem 3.1.17]{BH} If $M_\p$ is a nonzero module of finite injective dimension, then one has the equality $\id_{R_\p}M_\p=\depth R_\p$.
In particular, if $\supp_{R}(L)=\supp_{R}(M)$, then $L_\q$ and $M_\q$ have the same injective dimension for all prime ideals $\q$ of $R$ if and only if the equality $\fid_R (L)=\fid_R (M)$ holds.
\item Let $\{N_\lambda\}_{\lambda\in\Lambda}$ be a family of $R$-modules and $N=\bigoplus_{\lambda\in\Lambda} N_\lambda$.
Then $\id_R (N)={\rm sup}\{\id_R(N_\lambda) \mid \lambda\in\Lambda \}$.
\end{enumerate}
\end{lem}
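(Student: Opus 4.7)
The plan is to address the three items in turn. Items (1) and the first assertion of (2) are cited verbatim from Bruns--Herzog and require no new argument, so the actual work is to justify the ``in particular'' clause of (2) together with item (3).

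For the clause in (2), I would argue pointwise on $\spec(R)$ by splitting into three cases according to where a prime $\q$ sits. If $\q$ lies in the common support $\supp_R(L)=\supp_R(M)$ and also in the finite injective dimension locus of one (hence, under the hypothesis $\fid_R(L)=\fid_R(M)$, of the other), then both $L_\q$ and $M_\q$ are nonzero of finite injective dimension over $R_\q$, and the Bass formula (the first half of (2)) forces $\id_{R_\q}L_\q=\depth R_\q=\id_{R_\q}M_\q$. If $\q$ lies in the common support but outside the common fid locus, both injective dimensions are infinite. If $\q$ does not lie in the common support, both localizations vanish. In every case the injective dimensions agree, giving the reverse implication; the forward direction is immediate, since equal injective dimensions at every prime automatically yield equal fid loci.

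For (3), the plan is to combine two standard ingredients. First, each $N_\lambda$ is a direct summand of $N$, and $\Ext$ splits compatibly along such a summand, so $\id_R(N_\lambda)\le \id_R(N)$ for each $\lambda$, giving $\sup_\lambda \id_R(N_\lambda)\le \id_R(N)$. Second, over a noetherian ring every direct sum of injective modules is injective; hence if $I_\lambda^\bullet$ is an injective resolution of $N_\lambda$ of length $\id_R(N_\lambda)$, then the termwise direct sum $\bigoplus_\lambda I_\lambda^\bullet$ is an injective resolution of $N$ of length $\sup_\lambda \id_R(N_\lambda)$, yielding the reverse inequality.

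The only point requiring care is that the noetherian hypothesis is essential in (3); without it, a direct sum of injectives need not be injective and the identity can fail. Since the paper's standing convention is that all rings are noetherian, this is automatic, and I do not expect any other genuine obstacle.
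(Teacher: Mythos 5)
Your proof is correct. The paper does not actually supply a proof of this lemma—it is introduced with ``we list some well-known results,'' parts (1) and (2) are cited directly from Bruns--Herzog, and the ``in particular'' clause of (2) and all of (3) are left to the reader as standard. Your case analysis for (2) (using the Bass formula on the common support intersected with the common finite-injective-dimension locus, and noting both sides are infinite or both zero in the remaining cases) and your two-inequality argument for (3) (direct summands do not increase injective dimension; over a noetherian ring a termwise direct sum of injective resolutions is again an injective resolution) are exactly the standard arguments one would give, and your remark that the noetherian hypothesis is what makes the reverse inequality in (3) work is the right thing to flag.
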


Lemma \ref{id sum}(2) asserts that to prove Theorem \ref{main1 id stable}, it suffices to show the equalities of the finite injective dimension loci of $M/I^n M$ for all large $n$.
Our idea is to give these equalities using the openness of loci and Lemma \ref{Unified lemma}.
In the next two lemmas, for an $R$-algebra $S$ and an $R$-module M, we observe the Gorensteinness of $S$ and the finiteness of injective dimension of $M$.
Imposing the assumption on $\Ext_R^i(S,M)$, Lemma \ref{id lem1} asserts that the Gorensteinness of $S$ provides information about the finiteness of injective dimension of $M$, and Lemma \ref{id lem2} insists that the converse implication holds.
These lemmas play an essential role in the proof of Theorem \ref{fid locus extension}.

\begin{lem}\label{id lem1}
Let $R$ be a ring, $S$ an $R$-algebra, $M$ an $R$-module, and $n\ge 0$ an integer.
Put $t=\id_S (S)$.
Suppose that $\Ext_R^i(S,M)$ is a free $S$-module for all $0\le i\le n$ and $\Ext_R^j(S,M)=0$ for all $j>n$.
Then $\Ext_R^k(N,M)=0$ for any $k>n+t$ and any $S$-module $N$.
\end{lem}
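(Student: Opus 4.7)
The natural approach is to invoke the Grothendieck change-of-rings spectral sequence associated to the factorization of functors $\Hom_R(-,M)\cong \Hom_S(-,\Hom_R(S,M))$ on the category of $S$-modules. The first step is to observe that $\Hom_R(S,I)$ is an injective $S$-module whenever $I$ is an injective $R$-module: this follows from the adjunction isomorphism $\Hom_S(-,\Hom_R(S,I))\cong \Hom_R(-,I)$, which shows that $\Hom_S(-,\Hom_R(S,I))$ is exact. Consequently, applying $\Hom_R(S,-)$ to an injective $R$-resolution of $M$ produces a complex of injective $S$-modules, and the standard Grothendieck spectral sequence applies, yielding for every $S$-module $N$
\begin{equation*}
E_2^{p,q}=\Ext_S^p(N,\Ext_R^q(S,M))\Longrightarrow \Ext_R^{p+q}(N,M).
\end{equation*}

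The next step is to use the hypotheses on $\Ext_R^\ast(S,M)$ to bound the support of $E_2$. The vanishing $\Ext_R^q(S,M)=0$ for $q>n$ immediately kills every column with $q>n$. For $0\le q\le n$, the module $\Ext_R^q(S,M)$ is a free $S$-module, and by Lemma \ref{id sum}(3) its injective dimension over $S$ is at most $\id_S S=t$; this forces $\Ext_S^p(N,\Ext_R^q(S,M))=0$ whenever $p>t$. Hence $E_2^{p,q}$, and a fortiori $E_\infty^{p,q}$, is supported inside the rectangle $[0,t]\times[0,n]$, so every diagonal $p+q=k$ with $k>n+t$ is identically zero. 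The abutment then yields $\Ext_R^k(N,M)=0$ for all $k>n+t$, as required.

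The main delicate point is that the free $S$-modules $\Ext_R^q(S,M)$ may have infinite rank, so the naive identification $\Ext_S^p(N,S^{(\Lambda)})\cong \Ext_S^p(N,S)^{(\Lambda)}$ is not available in general. This is precisely where Lemma \ref{id sum}(3) is needed in its full strength: it bounds the injective dimension of an arbitrary direct sum by the supremum, which pins the injective dimension of any free $S$-module down to $\id_S S=t$ and thus supplies the vertical vanishing $E_2^{p,q}=0$ for $p>t$ without ever invoking commutation of $\Ext$ with infinite direct sums.
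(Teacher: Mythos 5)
Your proof is correct, but it takes a different route from the paper's. You invoke the Grothendieck change-of-rings spectral sequence $E_2^{p,q}=\Ext_S^p(N,\Ext_R^q(S,M))\Rightarrow\Ext_R^{p+q}(N,M)$, which exists because $\Hom_R(S,-)$ carries injective $R$-modules to injective $S$-modules; you then kill the columns $q>n$ by the vanishing hypothesis and the rows $p>t$ via Lemma \ref{id sum}(3), which bounds $\id_S\Ext_R^q(S,M)$ by $t$ even when these free $S$-modules have infinite rank. Your observation that Lemma \ref{id sum}(3) is exactly what is needed here --- since $\Ext$ need not commute with infinite direct sums in the second variable --- is on point. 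The paper, by contrast, deliberately avoids spectral sequences (as the author remarks just before Theorem \ref{fid locus extension}) and argues directly on the complex $J^\bullet=\Hom_R(S,I^\bullet)$ of injective $S$-modules: the short exact sequences $0\to\ker d^l\to J^l\to\image d^l\to 0$ and $0\to\image d^l\to\ker d^{l+1}\to\Ext_R^{l+1}(S,M)\to 0$ give $\id_S(\ker d^n)\le t$ by induction on $l$, the brutal truncation $J^{\ge n}$ is an injective $S$-resolution of $\ker d^n$ because $\Ext_R^j(S,M)=0$ for $j>n$, and the adjunction $\Hom_R(N,I^\bullet)\simeq\Hom_S(N,J^\bullet)$ then yields $\Ext_R^k(N,M)\simeq\Ext_S^{k-n}(N,\ker d^n)=0$ for $k>n+t$. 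Both arguments turn on the same two inputs --- preservation of injectivity under $\Hom_R(S,-)$ and the direct-sum formula for injective dimension --- so they are equivalent in substance. Yours is shorter once the spectral sequence is granted; the paper's is more elementary and self-contained, which appears to be a deliberate stylistic choice.
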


\begin{proof}
Let $I:0\to I^0\to I^1\to\cdots$ be an injective resolution of $M$.
There is a complex
$$
0\to \Hom_R(S, I^0) \xrightarrow{d^0} \Hom_R(S, I^1) \xrightarrow{d^1} \Hom_R(S, I^2) \xrightarrow{d^2} \cdots
$$
of injective $S$-modules.
Also, for any $0\le l\le n$, there are exact sequences
$$
0 \to \ker{d^l} \to \Hom_R(S, I^l) \to \image{d^l} \to 0 \ \ {\rm and}\ \ 
0 \to \image{d^l} \to \ker{d^{l+1}} \to \Ext_R^{l+1}(S, M) \to 0
$$
of $S$-modules.
By assumption, for any $0\le i\le n$, the free $S$-module $\Ext_R^i(S,M)$ has injective dimension $t$.
We see by induction on $l$ that for any $0\le l\le n$, $\ker{d^l}$ and $\image{d^l}$ have injective dimension at most $t$.
Since $\Ext_R^j(S,M)=0$ for any $j>n$, the complex
$$
0\to \Hom_R(S, I^n) \xrightarrow{d^n} \Hom_R(S, I^{n+1}) \xrightarrow{d^{n+1}} \Hom_R(S, I^{n+2}) \to \cdots
$$
is an injective resolution of $\ker{d^n}$ as an $S$-module.
On the other hand, there is an isomorphism 
$$
\Hom_R(N,I)\simeq\Hom_S(N,\Hom_R(S,I))
$$ 
of complexes for any $S$-module $N$.
We get
$
\Ext_R^k(N, M)\simeq\Ext_S^{k-n}(N,\ker{d^n})=0
$
for every $k>n+t$ since $\ker{d^n}$ has injective dimension at most $t$.
\end{proof}

\begin{lem}\label{id lem2}
Let $R$ be a ring, and $S$ an $R$-algebra.
Let $M$ be an $R$-module, and $n\ge 0$ an integer.
Suppose that $\Ext_R^n(S,M)$ is a nonzero free $S$-module, and $\Ext_R^i(S,M)=0$ for all $i>n$.
If $n\le \id_R(M)$, then $\id_S(S)\le \id_R(M)-n$.
\end{lem}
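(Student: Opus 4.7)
Set $d := \id_R(M)$. The plan is to reduce to the case $d < \infty$ (otherwise the desired inequality is vacuous) and then show directly that $\id_S(S) \le d - n$. I would begin by choosing an injective resolution
\[
0 \to M \to I^0 \to I^1 \to \cdots \to I^d \to 0
\]
of length $d$, and then apply $\Hom_R(S,-)$ to obtain a complex $J^\bullet$ with $J^i = \Hom_R(S, I^i)$. Each $J^i$ is injective as an $S$-module because $\Hom_R(S,-)$ is the right adjoint of the exact restriction of scalars, and the $i$-th cohomology of $J^\bullet$ is $\Ext_R^i(S, M)$. This is exactly the setup already used in the proof of Lemma~\ref{id lem1}.

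Next I would feed in the hypothesis that $\Ext_R^i(S, M) = 0$ for all $i > n$: the truncation
\[
0 \to \image(d^n) \to J^{n+1} \to J^{n+2} \to \cdots \to J^d \to 0
\]
is then an exact sequence of $S$-modules with each of $J^{n+1}, \ldots, J^d$ injective, so $\id_S(\image(d^n)) \le d - n - 1$. Combining this with the short exact sequence $0 \to \ker(d^n) \to J^n \to \image(d^n) \to 0$ (where $J^n$ is injective) and chasing the long exact $\Ext$-sequence over $S$ gives $\id_S(\ker(d^n)) \le d - n$.

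The key step is to exploit the freeness hypothesis on $\Ext_R^n(S, M)$. Because $\Ext_R^n(S, M) = \ker(d^n)/\image(d^{n-1})$ is a free, and hence projective, $S$-module, the short exact sequence
\[
0 \to \image(d^{n-1}) \to \ker(d^n) \to \Ext_R^n(S, M) \to 0
\]
splits, realizing $\Ext_R^n(S, M)$ as a direct summand of $\ker(d^n)$. By Lemma~\ref{id sum}(3) this yields $\id_S(\Ext_R^n(S, M)) \le \id_S(\ker(d^n)) \le d - n$, and writing $\Ext_R^n(S, M) \simeq S^{(\Lambda)}$ with $\Lambda \ne \emptyset$ and applying Lemma~\ref{id sum}(3) once more gives $\id_S(S) = \id_S(S^{(\Lambda)}) = \id_S(\Ext_R^n(S, M)) \le d - n$, which is the desired bound. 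The only real subtlety I expect is noticing this projective splitting; once it is spotted, the proof is a routine unwinding of the resolution, and in spirit the lemma is dual to Lemma~\ref{id lem1}, interchanging the roles of $\id_S(S)$ and $\id_R(M)$.
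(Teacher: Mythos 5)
Your proof is correct and follows essentially the same route as the paper's: truncate $\Hom_R(S,I^\bullet)$ to bound $\id_S(\ker d^n)$ by $\id_R(M)-n$, then use freeness of $\Ext_R^n(S,M)$ to realize it (hence $S$) as a direct summand of $\ker d^n$. The only cosmetic difference is that the paper observes directly that the truncated complex is an injective $S$-resolution of $\ker d^n$, whereas you split that observation into the two short exact sequences involving $\image(d^n)$; the splitting of $0\to\image(d^{n-1})\to\ker d^n\to\Ext_R^n(S,M)\to 0$ is the same key step the paper phrases as the natural epimorphism onto a nonzero free module.
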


\begin{proof}
We may assume $t:=\id_R(M)<\infty$.
Let $I:0\to I^0\to I^1\to\cdots \to I^t \to 0$ be an injective resolution of $M$.
Since $\Ext_R^i(S,M)=0$ for any $i>n$, the complex
$$
0\to \Hom_R(S, I^n) \xrightarrow{d} \Hom_R(S, I^{n+1}) \to \Hom_R(S, I^{n+2}) \to \cdots \to \Hom_R(S, I^t) \to 0
$$
is an injective resolution of $\ker{d^n}$ as an $S$-module.
In particular, $\ker{d^n}$ has injective dimension at most $t-n$.
There is the natural epimorphism from $\ker{d^n}$ to the nonzero free $S$-module $\Ext_R^n(S,M)$.
Hence $S$ is a direct summand of $\ker{d^n}$.
We have $\id_S(S)\le \id_S(\ker{d^n})\le t-n$.
\end{proof}

Recall the following remark on graded modules and Ext and Tor modules.
For instance, a similar argument to the remark exists in the proofs of \cite[Lemma 3.7]{Ki} and \cite[Proposition 4]{Ko}.

\begin{rem}\label{graded ext and f.g.}
Let $A=\bigoplus_{i\ge 0}A_i$ be a graded ring and let $M=\bigoplus_{i\in \Z}M_i$ be a graded $A$-module.
Let $N$ be a finitely generated $A_0$-module.
Then, for any integer $i\ge 0$, $\Ext_{A_0}^i(N,M)\simeq\bigoplus_{t\in\Z}\Ext_{A_0}^i(N,M_t)$ and $\Tor_i^{A_0}(N,M)\simeq\bigoplus_{t\in\Z}\Tor_i^{A_0}(N,M_t)$
are graded $A$-modules.
If $M$ is finitely generated, then so are these modules.
(Compute them using a resolution of N by free $A_0$-modules of finite rank.)
\end{rem}

The following theorem improves \cite[Theorems 3.6 and 4.4]{K}.
Indeed, if $A=A_0$, then $M$ is a finitely generated $A_0$-module.
Theorem \ref{fid locus extension} removes the assumptions imposed in \cite[Theorem 4.4]{K} that $M_\p$ is maximal Cohen--Macaulay.
While \cite[Proposition 2.4]{Ta}, used in the proof of \cite[Theorem 3.6]{K}, is proved using spectral sequences, Theorem \ref{fid locus extension} is done without using them.

\begin{thm}\label{fid locus extension}
Let $A=\bigoplus_{i\ge 0}A_i$ be a graded ring and let $M=\bigoplus_{i\in \Z}M_i$ be a finitely generated graded $A$-module.
Suppose that $\p\in\supp_{A_0}(M)\cap\fid_{A_0}(M)$.
Then the following conditions are equivalent.
\begin{enumerate}[\rm(1)]
\item $\gor(A_0/\p)$ contains a nonempty open subset of $\spec (A_0/\p)$.
\item $\fid_{A_0}(M)$ contains a nonempty open subset of $\V_{A_0}(\p)$.
\end{enumerate}
\end{thm}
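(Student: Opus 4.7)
The plan is to apply Lemmas \ref{id lem1} and \ref{id lem2} at the localizations $R_\q$ with $S = (R/\p)_\q$. Set $R := A_0$ and $t := \id_{R_\p} M_\p = \depth R_\p$, which is finite by hypothesis and Lemma \ref{id sum}(2), and $E^i := \Ext^i_R(R/\p, M)$. By Remark \ref{graded ext and f.g.}, each $E^i$ is a finitely generated graded $A$-module, and since $\p$ annihilates it, $E^i$ is a finitely generated graded module over the graded ring $A/\p A$ whose degree-zero part is the integral domain $R/\p$. Lemma \ref{id sum}(1) identifies $E^i_\p$ with $\Ext^i_{R_\p}(k(\p), M_\p)$, so $E^t_\p$ is a nonzero $k(\p)$-vector space and $E^i_\p = 0$ for $i > t$.

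The common thread for both implications is to produce a nonempty open subset $V \subseteq \V_R(\p)$ containing $\p$ such that for all $\q \in V$: (a) $E^i_\q$ is a free $(R/\p)_\q$-module for $0 \le i \le t$, with $E^t_\q \ne 0$; and (b) $E^i_\q = 0$ for all $i > t$. Noting $t \le \id_{R_\q} M_\q$ by the behavior of injective dimension under further localization, the two directions then follow by symmetric applications of the two lemmas. For $(1) \Rightarrow (2)$, I would intersect $V$ with the given open subset of $\gor(R/\p)$ and apply Lemma \ref{id lem1} at $R_\q$ with $S = (R/\p)_\q$ (Gorenstein, hence $\id_S S < \infty$), $n = t$, and $N = k(\q)$ viewed as an $S$-module; this gives $\id_{R_\q} M_\q \le t + \id_S S < \infty$, so $\q \in \fid_R(M)$. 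For $(2) \Rightarrow (1)$, I would intersect $V$ with the given open subset of $\fid_R(M)$ and apply Lemma \ref{id lem2} at $R_\q$ with the same $S$ and $n = t$; this gives $\id_S S \le \id_{R_\q} M_\q - t < \infty$, hence $(R/\p)_\q$ is Gorenstein.

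To produce $V$, condition (a) amounts to a finite collection of freeness requirements, one for each $i \in \{0, 1, \dots, t\}$. Each is obtained on a dense open subset of $\spec(R/\p)$ by generic freeness applied to the finitely generated graded $A/\p A$-module $E^i$ over the integral domain $R/\p$; intersecting finitely many of these with the dense open locus where $E^t_\q \ne 0$ yields an open set satisfying (a). Condition (b) is the main obstacle. For each \emph{individual} $i > t$, the finite generation of $E^i$ over $A$ together with $E^i_\p = 0$ yields an element $f_i \in R \setminus \p$ with $f_i \cdot E^i = 0$, so $E^i$ vanishes on the dense open set $D(f_i) \cap \V_R(\p)$; however, the intersection over \emph{all} $i > t$ is infinite, and a priori need not be open. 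The technical heart of the proof is to handle this infinite family of vanishing conditions simultaneously, either by establishing a uniform cohomological bound on the support of $E^{>t}$ near $\p$ (using, in each direction, the respective assumption to truncate the range of relevant $i$) or by combining the generic-freeness output at $0 \le i \le t$ with the two-sided control afforded by Lemmas \ref{id lem1} and \ref{id lem2}. This is precisely the step that, in the earlier result \cite[Theorem 3.6]{K}, was treated via the spectral sequence of \cite[Proposition 2.4]{Ta}; the present theorem eliminates this dependence and removes the maximal Cohen--Macaulay assumption on $M_\p$ from \cite[Theorem 4.4]{K}.
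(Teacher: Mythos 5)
Your framework is correct and matches the paper's: localize to arrange conditions (a) and (b) on a neighborhood of $\p$, then intersect with the given open set and apply Lemmas~\ref{id lem1} and \ref{id lem2} symmetrically. You have also correctly identified that condition (b) --- vanishing of $\Ext^i_{A_0}(A_0/\p, M)_\q$ for \emph{all} $i > t$ simultaneously on an open neighborhood of $\p$ in $\V_{A_0}(\p)$ --- is the nontrivial step, and that naive generic freeness applied degree by degree fails because it produces an infinite intersection of opens. But you have left this step unresolved, and neither of your suggested strategies actually closes it: the second (``combining \ldots with the two-sided control afforded by Lemmas~\ref{id lem1} and \ref{id lem2}'') is circular, since both lemmas take the vanishing $\Ext^j_R(S,M)=0$ for $j>n$ as a \emph{hypothesis}, and the first (``a uniform cohomological bound on the support of $E^{>t}$'') names the desired conclusion without a mechanism.

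The paper's mechanism is a regular-sequence/filtration argument that you would need to supply. After the first round of localization one may assume $\Ext^i_{A_0}(A_0/\p,M)$ is $A_0/\p$-free for $0\le i\le n$ and $\Ext^{n+1}_{A_0}(A_0/\p,M)=0$, where $n=\id_{(A_0)_\p}M_\p=\height\p$ (this uses that $(A_0)_\p$ is Cohen--Macaulay, via \cite[Corollary~9.6.2]{BH}). A second round of localization, via \cite[Lemma~2.7]{K}, produces an $A_0$-regular sequence $\xx=x_1,\dots,x_n$ in $\p$ with $\p^r\subseteq\xx A_0$ for some $r$, and arranges each graded quotient $L_{i-1}/L_i$ of the $\p$-adic filtration $L_i=\p^i(A_0/\xx A_0)$ to be $A_0/\p$-free. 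Then for any $j$, $\Ext^j_{A_0}(A_0/\p,M)=0$ forces $\Ext^j_{A_0}(\p/\xx A_0,M)=0$ by descending the finite filtration $L_1\supseteq\cdots\supseteq L_r=0$. Since $\xx$ has a Koszul resolution of length $n$, $\Ext^i_{A_0}(A_0/\xx A_0,M)=0$ for $i>n$, so the short exact sequence $0\to\p/\xx A_0\to A_0/\xx A_0\to A_0/\p\to 0$ gives $\Ext^{i+1}_{A_0}(A_0/\p,M)\cong\Ext^i_{A_0}(\p/\xx A_0,M)$ for $i>n$. Induction on $i$ starting from the base case $i=n+1$ then kills $\Ext^j_{A_0}(A_0/\p,M)$ for every $j>n$ at once, after only finitely many localizations. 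This is exactly the device that replaces the spectral sequence of \cite[Proposition~2.4]{Ta}; without it your proof does not go through.
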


\begin{proof}
We can freely replace our ring $A$ with its localization $A_f$ for any element $f\in A_0\setminus\p$ to prove the theorem; see \cite[Lemma 2.5]{K}.
Since $\p\in\supp_{A_0}(M)\cap\fid_{A_0}(M)$, there exists $j\in \Z$ such that $(M_j)_\p$ is a nonzero finitely generated $(A_0)_\p$-module of finite injective dimension.
It follows from \cite[Corollary 9.6.2 and Remark 9.6.4(a)]{BH} that $(A_0)_\p$ is Cohen--Macaulay.
We have $n:=\id_{(A_0)_\p}(M_\p)=\height\p$; see (2) and (3) of Lemma \ref{id sum}.
It follows from Remark \ref{graded ext and f.g.}, \cite[Lemma 8.1]{HR} and \cite[Lemma 1.1.3(1)]{RS} that we may assume that for any $0\le i\le n$, $\Ext_{A_0}^i(A_0/\p, M)$ is free as an $A_0/\p$-module and $\Ext_{A_0}^{n+1}(A_0/\p, M)$ is the zero module.
By \cite[(3) and (4) of Lemma 2.7]{K}, we may assume that there exists an $A_0$-regular sequence $\bm{x}=x_1,\ldots,x_n$ in $\p$ and that $\p^r$ is contained in $\bm{x} A_0$ for some integer $r>0$.
Set $L_i=\p^i(A_0/\bm{x} A_0)$ for each $0\le i\le r$.
Thanks to \cite[Lemma 2.7(6)]{K}, we may assume that $L_{i-1}/L_{i}$ is a free $A_0/\p$-module for each $1\le i\le r$.
This means that for each $j>0$, if $\Ext_{A_0}^{j} (A_0/\p, M)$ is the zero module, then so is $\Ext_{A_0}^{j} (\p/\bm{x} A_0, M)$.
Since $\bm{x}$ is an $A_0$-regular sequence, $\Ext_{A_0}^i(A_0/\bm{x} A_0, M)=0$ for any $i>n$.
By induction on $i$, we have $\Ext_{A_0}^{i+1} (A_0/\p, M)\simeq\Ext_{A_0}^{i} (\p/\bm{x} A_0, M)=0$ for any $i>n$.

(1) $\Rightarrow$ (2): 
We may assume that $A_0/\p$ is Gorenstein.
Let $\q\in\V_{A_0}(\p)$ and $t=\id_{(A_0/\p)_\q}(A_0/\p)_\q$.
Lemma \ref{id lem1} yields that $\Ext_{(A_0)_\q}^k((A_0/\q)_\q,M_\q)=0$ for any $k>n+t$, which means that $\id_{(A_0)_\q}(M_i)_\q \le n+\height(\q/\p)$ for all $i\in \Z$ by Lemma \ref{id sum}(1).
Lemma \ref{id sum}(3) deduces that $\q$ belongs to $\fid_{A_0}(M)$.

(2) $\Rightarrow$ (1): 
We may assume that $\fid_{A_0}(M)$ contains $\V_{A_0}(\p)$.
By (1) and (3) of Lemma \ref{id sum}, the free $A_0/\p$-module $\Ext_{A_0}^n (A_0/\p, M)$ is nonzero.
Let $\q\in\V_{A_0}(\p)$.
Note that $n=\height\p\le\height\q=\id_{(A_0)_\q}(M_\q)<\infty$.
It follows from Lemma \ref{id lem2} that we have inequalities $\id_{(A_0/\p)_\q}(A_0/\p)_\q \le \id_{(A_0)_\q}(M_j)_\q-n< \infty$.
It means that $\q/\p$ is in $\gor(A_0/\p)$.
\end{proof}

Below is a direct corollary of Theorem \ref{fid locus extension}.

\begin{cor}\label{cor fid locus2}
Let $A=\bigoplus_{i\ge 0}A_i$ be a graded ring and let $M=\bigoplus_{i\in \Z}M_i$ be a finitely generated graded $A$-module.
Suppose that $\gor(A_0/\p)$ contains a nonempty open subset of $\spec (A_0/\p)$ for any prime ideal $\p$ of $A_0$ belonging to $\supp_{A_0}(M)\cap\fid_{A_0}(M)$.
Then $\fid_{A_0}(M)$ is an open subset of $\spec(A_0)$.
\end{cor}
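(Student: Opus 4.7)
The plan is to verify a pointwise openness condition using Theorem \ref{fid locus extension} and then to deduce global openness via the following standard criterion for noetherian spectra: a subset $U \subseteq \spec(A_0)$ stable under generalization is open if and only if for every $\p \in U$ there exists $f \in A_0 \setminus \p$ such that $\V_{A_0}(\p) \cap \D_{A_0}(f) \subseteq U$. Since $\fid_{A_0}(M)$ is obviously stable under generalization, the two tasks are to verify the pointwise condition for $U = \fid_{A_0}(M)$ and to justify the criterion.

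For the pointwise condition, fix $\p \in \fid_{A_0}(M)$. If $\p \notin \supp_{A_0}(M)$, any $f \in \ann_{A_0}(M) \setminus \p$ satisfies $\D_{A_0}(f) \subseteq \fid_{A_0}(M)$ (the module vanishes everywhere on $\D_{A_0}(f)$), and intersecting with $\V_{A_0}(\p)$ gives the required set. If instead $\p \in \supp_{A_0}(M) \cap \fid_{A_0}(M)$, the hypothesis on $\gor(A_0/\p)$ together with the implication (1) $\Rightarrow$ (2) of Theorem \ref{fid locus extension} produces a nonempty open subset $V$ of $\V_{A_0}(\p)$ contained in $\fid_{A_0}(M)$. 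Because $\V_{A_0}(\p)$ is irreducible with generic point $\p$, this $V$ must contain $\p$, and hence has the form $\V_{A_0}(\p) \cap \D_{A_0}(f)$ for some $f \in A_0 \setminus \p$.

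To establish the criterion, I would argue by contradiction. Put $Z = \spec(A_0) \setminus U$ and suppose $Z$ is not closed. Noetherianity of $A_0$ ensures that $\overline{Z}$ has finitely many irreducible components $\V_{A_0}(\q_1), \ldots, \V_{A_0}(\q_m)$; if every $\q_j$ lay in $Z$, stability under specialization would give $\overline{Z} = \bigcup_j \V_{A_0}(\q_j) \subseteq Z$, contradicting $\overline{Z} \supsetneq Z$. So pick $\q_j \in U$ and apply the pointwise condition to obtain $f \in A_0 \setminus \q_j$ with $Z \cap \V_{A_0}(\q_j) \subseteq \V_{A_0}(\q_j) \cap \V_{A_0}(f)$, a proper closed subset of the irreducible space $\V_{A_0}(\q_j)$. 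On the other hand, $\overline{Z} = \bigcup_k \overline{Z \cap \V_{A_0}(\q_k)}$, and irreducibility of $\V_{A_0}(\q_j)$ together with the fact that distinct irreducible components of $\overline{Z}$ are incomparable under inclusion forces $\overline{Z \cap \V_{A_0}(\q_j)} = \V_{A_0}(\q_j)$, contradicting the previous inclusion.

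The main obstacle is precisely this density step, which requires translating the local openness given by Theorem \ref{fid locus extension} into global openness; once that step is in place, the corollary drops out directly by invoking the theorem for each $\p \in \supp_{A_0}(M) \cap \fid_{A_0}(M)$.
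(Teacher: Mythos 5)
Your proof is correct and follows essentially the same route as the paper: the paper notes that $\supp_{A_0}(M)$ is closed (citing \cite[Lemma 1.1.3(1)]{RS}) and then applies the Nagata-type openness criterion exactly as you do, delegating that criterion and the reduction to \cite[Corollary 3.7(1)]{K} rather than spelling it out. The only detail you elide is the existence of $f\in\ann_{A_0}(M)\setminus\p$ when $\p\notin\supp_{A_0}(M)$, which holds because $M$ is finitely generated over $A$ (so a single element of $A_0$ outside $\p$ annihilates all its generators, hence all of $M$), giving $\supp_{A_0}(M)=\V_{A_0}(\ann_{A_0}(M))$.
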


\begin{proof}
It follows from \cite[Lemma 1.1.3(1)]{RS} that $\supp_{A_0}(M)$ is closed.
Hence the corollary is shown analogously as in the proof of \cite[Corollary 3.7(1)]{K}; replace \cite[Theorem 3.6]{K} with Theorem \ref{fid locus extension} in the proof of \cite[Corollary 3.7(1)]{K}.
\end{proof}

Applying Lemma \ref{Unified lemma} to a family of finite injective dimension loci shows the asymptotic stability of these loci of the homogeneous components of a graded module.

\begin{prop}\label{homog comp fid locus}
Let $A=\bigoplus_{i\ge 0}A_i$ be a homogeneous graded ring and $M=\bigoplus_{i\in \Z}M_i$  a finitely generated graded $A$-module.
Denote by $N_t$ the graded $A$-submodule $\bigoplus_{i\ge t}M_i$ of $M$ for each $t\in\Z$.
If $\fid_{A_0}(N_t)$ is open for all $t\in \Z$, then there is an integer $k\in \Z$ such that $\fid_{A_0}(M_n)=\fid_{A_0}(M_k)$ for all $n\ge k$.
\end{prop}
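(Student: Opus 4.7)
The plan is to apply Lemma~\ref{Unified lemma} to the family $\{\fid_{A_0}(M_m)\}_m$ of finite-injective-dimension loci, with $R=A_0$, $\Lambda=\{k,k+1,\ldots\}$ for a suitable $k$, threshold $n=\infty$, $f(\p,m)=\id_{(A_0)_\p}(M_m)_\p$, and $g(\p)=\id_{(A_0)_\p}(N_k)_\p$, so that $X=\fid_{A_0}(N_k)$ and $X_m=\fid_{A_0}(M_m)$. The conclusion of the lemma would then be exactly the desired stability $\fid_{A_0}(M_m)=\fid_{A_0}(N_k)$ for every $m\ge k$, from which $\fid_{A_0}(M_m)=\fid_{A_0}(M_k)$ follows by taking $m=k$.

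To choose $k$, observe first that as $A_0$-modules $N_t=M_t\oplus N_{t+1}$, so Lemma~\ref{id sum}(3) gives $\fid_{A_0}(N_t)=\fid_{A_0}(M_t)\cap\fid_{A_0}(N_{t+1})\subseteq\fid_{A_0}(N_{t+1})$. Hence $\{\fid_{A_0}(N_t)\}_t$ is an ascending chain of open subsets of the noetherian space $\spec(A_0)$, and it stabilizes. Enlarging the stabilization index by exploiting that $M$ is finitely generated over the homogeneous ring $A$, I obtain $k$ satisfying $M_{t+1}=A_1 M_t$ for all $t\ge k$, and set $U:=\fid_{A_0}(N_k)=\fid_{A_0}(N_t)$ for every $t\ge k$. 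Conditions~(1) and~(3) of Lemma~\ref{Unified lemma} are routine: (1) follows from Lemma~\ref{id sum}(3) applied to $(N_k)_\p=\bigoplus_{m\ge k}(M_m)_\p$, and (3) from the fact that finite injective dimension is preserved under further localization.

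The main obstacle is condition~(2): for every minimal element $\q$ of $\spec(A_0)\setminus U$, the function $m\mapsto\id_{(A_0)_\q}(M_m)_\q$ must be constant on $\Lambda$, and since $g(\q)=\infty$ this common value must be $\infty$. I would argue by contradiction, supposing $\id_{(A_0)_\q}(M_{m_0})_\q=I<\infty$ for some $m_0\ge k$. By Remark~\ref{graded ext and f.g.}, each $\Ext^i_{A_0}(A_0/\q,N_k)$ is a finitely generated graded $A$-module whose $t$-th component is $\Ext^i_{A_0}(A_0/\q,M_t)$, and the homogeneity of $A$ yields $\Ext^i_{A_0}(A_0/\q,M_{t+1})=A_1\cdot\Ext^i_{A_0}(A_0/\q,M_t)$ once $t$ exceeds the generation degree of this Ext module. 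The hypothesis gives $\Ext^i_{A_0}(A_0/\q,M_{m_0})_\q=0$ for all $i>I$; localizing at $\q$ and iterating this propagation forces $\Ext^i_{A_0}(A_0/\q,M_t)_\q=0$ for all $i>I$ and all $t\ge m_0$. By Lemma~\ref{id sum}(1) this gives $\id_{(A_0)_\q}(M_t)_\q\le I$ for every $t\ge m_0$, so Lemma~\ref{id sum}(3) yields $\id_{(A_0)_\q}(N_{m_0})_\q\le I<\infty$, contradicting $\q\notin U=U_{m_0}$. The delicate point is that the generation-degree bound underlying the propagation must behave uniformly in $i$; I expect this to be handled either by enlarging $k$ to absorb the bounds for the relevant values of $i$, or by passing to the graded Ext modules over the localized ring $A\otimes_{A_0}(A_0)_\q$ and reading off the bounds in the local graded setting. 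With condition~(2) verified, Lemma~\ref{Unified lemma} gives $\fid_{A_0}(M_m)=U=\fid_{A_0}(M_k)$ for every $m\ge k$.
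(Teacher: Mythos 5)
Your high-level plan --- set up an ascending, eventually-stable chain $\fid_{A_0}(N_t)$, pass to its finitely many minimal non-members, and then invoke Lemma~\ref{Unified lemma} with $f(\p,m)=\id_{(A_0)_\p}(M_m)_\p$, $g(\p)=\id_{(A_0)_\p}(N_k)_\p$ --- is exactly the paper's. Conditions~(1) and~(3) are disposed of the same way. But the crux is condition~(2), and here your argument has a genuine gap.

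To run the contradiction you need $\Ext^i_{A_0}(A_0/\q,M_t)_\q=0$ for \emph{all} $i>I$ and all $t\ge m_0$, starting from the vanishing at $t=m_0$. The propagation $\Ext^i_{A_0}(A_0/\q,M_{t+1})=A_1\cdot\Ext^i_{A_0}(A_0/\q,M_t)$ holds only once $t$ is at least the generation degree $d_i$ of the finitely generated graded $A$-module $\Ext^i_{A_0}(A_0/\q,N_k)$. That degree depends on $i$, and there is no reason for $\{d_i\}_{i>I}$ to be bounded; since you must control infinitely many cohomological degrees simultaneously, ``enlarging $k$'' or localizing the graded ring cannot absorb the bounds, because there is no finite bound to absorb. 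As written the propagation fails for those $i$ with $d_i>m_0$, at precisely the intermediate values $m_0<t<d_i$.

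The paper sidesteps this by invoking \cite[Theorem 1.1]{FFGR} and \cite[Theorem 2]{Rob} together with Lemma~\ref{id sum}(2): for each of the finitely many minimal elements $\q$ of $\spec(A_0)\setminus X$, one can choose a single integer $s>\height\q$ so that $\id_{(A_0)_\q}(M_n)_\q=\infty$ is detected solely by the non-vanishing of $\Ext^s_{A_0}(A_0/\q,M_n)_\q$, equivalently by whether $\q$ contains $\ann_{A_0}(\Ext^s_{A_0}(A_0/\q,M_n))$. This collapses the problem to the asymptotic behavior of a \emph{single} finitely generated graded module $\Ext^s_{A_0}(A_0/\q,N_k)$, whose annihilator stabilizes in high degrees by a Kodiyalam-type argument (the paper cites the proof of \cite[Lemma 3.7]{Ki}). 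Your proof would be repaired by inserting exactly this reduction: fix $s>\height\q$, use FFGR--Roberts to reduce to $\Ext^s$ alone, and then your propagation/generation-degree argument becomes sound because it involves only one graded Ext module. With that fix your route coincides with the paper's.
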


\begin{proof}
The open subset $\fid_{A_0}(N_t)$ of $\spec(A_0)$ is contained in $\fid_{A_0}(N_{t+1})$ for any $t\in \Z$.
There is an integer $m\in \Z$ such that $X:=\fid_{A_0}(N_m)=\fid_{A_0}(N_t)$ for all $t\ge m$ since $A_0$ is noetherian.
Let $Y$ be the set of minimal elements of $\spec(R)\setminus X$.
As $X$ is open, $Y$ is a finite set.
We take an integer $s$ such that $s>\height\q$ for all $\q\in Y$.
It follows from Lemma \ref{id sum}(2), \cite[Theorem 1.1]{FFGR} and \cite[Theorem 2]{Rob} that for any $\q\in Y$, $\id_{A_0} (M_n)_\q=\infty$ if and only if $\q$ contains $\ann_{A_0}(\Ext_{A_0}^s(A_0/\q,M_n))$.
A similar argument to the proof of \cite[Lemma 3.7]{Ki} shows that there exists an integer $k\ge m$ such that for all $n\ge k$ and all $\q\in Y$, $\ann_{A_0}(\Ext_{A_0}^s(A_0/\q,M_n))=\ann_{A_0}(\Ext_{A_0}^s(A_0/\q,M_k))$.
This means that for any $n\ge k$ and any $\q\in Y$, $\id_{A_0} (M_n)_\q=\id_{A_0} (M_k)_\q$; see Lemma \ref{id sum}(2).
Applying Lemma \ref{Unified lemma} to $\Lambda=\{n\in\Z \mid n\ge k\}$, $n=\infty$, $f(\p, \lambda)=\id_{A_0} (M_\lambda)_\q$ and $g(\p)=\id_{A_0} (N_k)_\q$, we get $X=\fid_{A_0}(M_n)$ for all $n\ge k$.
\end{proof}

Now we are ready to give a proof of the main result of this section.
The structure of this section is constructed with reference to that of \cite{Ki}.
However, many properties of depth were used in the proof of the main result of \cite{Ki}, such as the fact that the depth of a nonzero module is always finite and depth lemma.
Since injective dimension does not satisfy those properties, the proof of the main theorem of this paper requires a different argument from that.

\begin{thm}\label{main thm of fid}
Let $R$ be a ring, $I$ an ideal of $R$, and $M$ a finitely generated $R$-module.
Suppose that $\gor(R/\p)$ contains a nonempty open subset of $\spec (R/\p)$ for any $\p\in\supp_R(M)\cap\V(I)$.
Then there is an integer $k\ge 0$ such that for all $n\ge k$ and all prime ideals $\p$ of $R$,
$$
\id_{R_\p} (M/I^n M)_\p=\id_{R_\p} (M/I^k M)_\p.
$$ 
\end{thm}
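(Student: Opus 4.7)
The plan is to apply Proposition~\ref{homog comp fid locus} to the associated graded module and then use short-exact-sequence devissage, combined with the classical pointwise asymptotic stability for local rings, to obtain the uniform conclusion. Let $A = R[It] = \bigoplus_{i\ge 0}I^it^i\subseteq R[t]$, the Rees algebra, which is a homogeneous graded ring with $A_0 = R$, and let $G = \bigoplus_{i\ge 0}G_i$ with $G_i := I^iM/I^{i+1}M$; this is a finitely generated graded $A$-module, generated in degree $0$ by $M/IM$. Set $V_n := \fid_R(M/I^nM)$. Since $\supp_R(M/I^nM)=\supp_R(M)\cap \V_R(I)$ and $\supp_R(\bigoplus_{i\ge t}G_i)\subseteq \supp_R(M)\cap \V_R(I)$, both lie in the locus where the hypothesis of the theorem supplies the required nonempty open Gorenstein subset. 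Hence Corollary~\ref{cor fid locus2} shows that each $V_n$ and each $\fid_R(\bigoplus_{i\ge t}G_i)$ is open in $\spec R$, and Proposition~\ref{homog comp fid locus} then yields $k_1\ge 0$ with $U := \fid_R(G_n)=\fid_R(G_{k_1})$ for all $n\ge k_1$.

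From the short exact sequence $0\to G_{n-1}\to M/I^nM\to M/I^{n-1}M\to 0$ and the standard bounds on injective dimensions in short exact sequences, if any two of $\id_{R_\p}(G_{n-1})_\p$, $\id_{R_\p}(M/I^nM)_\p$, $\id_{R_\p}(M/I^{n-1}M)_\p$ are finite then so is the third. For $n\ge k_1+1$ this yields $V_n\cap U=V_{n-1}\cap U$, so $V_n\cap U$ is constant for $n\ge k_1$. Moreover, for $\p\in U^c$ we cannot have both $\p\in V_n$ and $\p\in V_{n-1}$ (else $\p\in\fid_R(G_{n-1})=U$), so on $U^c$ the set of indices $n\ge k_1+1$ with $\p\in V_n$ contains no two consecutive integers. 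Combined with the pointwise asymptotic stability of $\id_{R_\p}(M/I^nM)_\p$ on the local ring $R_\p$ (see \cite{Ko, T}), the eventually constant value of the sequence must be $\infty$; that is, $\p\notin V_n$ for all sufficiently large $n$, with the bound depending a priori on $\p$.

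To make this uniform, the closed set $U^c\subseteq \spec(R)$ has only finitely many minimal primes $\q_1,\ldots,\q_r$. For each $\q_i$ the above argument supplies $k(\q_i)$ with $\q_i\notin V_n$ for $n\ge k(\q_i)$. Since each $V_n$ is stable under generalization, $\q_i\notin V_n$ forces $\p\notin V_n$ for every $\p\supseteq \q_i$; as $U^c=\bigcup_i \V_R(\q_i)$, we conclude $V_n\subseteq U$ for $n\ge k_2 := \max_i k(\q_i)$. Setting $k := \max(k_1,k_2)$, the locus $V_n=V_n\cap U$ is constant for $n\ge k$, and Lemma~\ref{id sum}(2) converts this stability of loci into stability of the injective dimensions themselves (equal to $\depth R_\p$ on $V_n$ and to $\infty$ on $\supp_R(M/I^nM)\setminus V_n$, both independent of $n$). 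The main obstacle is the transition from pointwise to uniform asymptotic stability on $U^c$; this is where the input from \cite{Ko, T} is essential and is combined with the noetherian argument on the finitely many minimal primes of the closed set $U^c$.
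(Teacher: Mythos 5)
Your proof is correct and follows the same overall strategy as the paper's: pass to the Rees algebra and the associated graded module, invoke Corollary~\ref{cor fid locus2} and Proposition~\ref{homog comp fid locus} to stabilize the open locus $U=\fid_R(I^nM/I^{n+1}M)$, use the short exact sequence $0\to I^{n-1}M/I^nM\to M/I^nM\to M/I^{n-1}M\to 0$ for devissage, and feed in Kodiyalam's pointwise asymptotic stability at finitely many minimal primes together with stability under generalization. The only real difference is in the packaging of the final step: where the paper routes the conclusion through Lemma~\ref{Unified lemma} applied to $\fid_R(\bigoplus_{i\ge k}M/I^iM)$, you instead observe that for $\p\in U^c$ the set of $n$ with $\p\in V_n$ contains no two consecutive integers, so pointwise stability forces $\p\notin V_n$ eventually and hence $V_n\subseteq U$ for large $n$ — a neat direct route to the same endpoint.
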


\begin{proof}
The Rees ring $A=\bigoplus_{i\ge 0}I^i$ is a homogeneous noetherian graded ring and the associated graded module $N=\bigoplus_{i\ge 0}I^i M/I^{i+1} M$ is a finitely generated graded $A$-module.
Note that for any integer $t$, $\supp_R(\bigoplus_{i\ge t}I^i M/I^{i+1} M)$ is contained in $\supp_R(M)\cap\V(I)$.
There exists an integer $l\ge 0$ such that $X :=\fid_R(I^n M/I^{n+1} M)=\fid_R(I^l M/I^{l+1} M)$ for all $n\ge l$ by Corollary \ref{cor fid locus2} and Proposition \ref{homog comp fid locus}.
Now $X$ is an open subset of $\spec (R)$.
It follows from \cite[Corollary 8]{Ko} that there is an integer $k\ge l$ such that $\id_{R_\q} (M/I^n M)_\q=\id_{R_\q} (M/I^k M)_\q$ for all $n\ge k$ and all minimal elements $\q$ of $\spec(R)\setminus X$.
Put $N=\bigoplus_{i\ge k}M/I^i M$.
Let $\q$ be a minimal element of $\spec(R)\setminus\fid_R(N)$.
We claim that  for all $n\ge k$, $\id_{R_\q} (M/I^n M)_\q=\id_{R_\q} (M/I^k M)_\q$.
For each $n\ge k$, consider the exact sequence 
\begin{equation}\label{main thm of fid1}
0 \to (I^n M/I^{n+1} M)_\q \to (M/I^{n+1} M)_\q \to (M/I^n M)_\q \to 0.
\end{equation}
By (\ref{main thm of fid1}), we see that $X$ contains $\fid_R(N)$.
If $\q\in X$, then it is seen that $\id_{R_\q} (I^n M/I^{n+1} M)_\q<\infty$ for each $n\ge k$, and thus the claim follows from (\ref{main thm of fid1}).
Otherwise, since $\q$ is minimal in $\spec(R)\setminus\fid_R(N)$, so is it in $\spec(R)\setminus X$, and hence the claim holds.
Applying Lemma \ref{Unified lemma}, we get $\fid_R(N)=\fid_R(M/I^n M)$ for all $n\ge k$, which means that the assertion holds; see Lemma \ref{id sum}(2).
\end{proof}

We recall a few definitions of notions used in our next result.

\begin{dfn}\label{def rings}
A ring $R$ is said to be \textit{quasi-excellent} if the following two conditions are satisfied.
\begin{enumerate}[\rm(1)]
\item  For all finitely generated $R$-algebras $S$, $\reg(S) = \{\p\in\spec (S)\mid$ the local ring $S_\p$ is regular$ \}$ is open.
\item All the formal fibers of $R_\p$ are regular for all prime ideals $\p$ of $R$.
\end{enumerate}
A ring $R$ is said to be \textit{excellent} if it is quasi-excellent and universally catenary.
A ring in which ``regular'' is replaced with ``Gorenstein'' in both conditions (1) and (2) in the definition of an excellent ring is called an \textit{acceptable ring} \cite{S}.
\end{dfn}

It is well known that a complete ring is excellent and that an excellent ring and a homomorphic image of a Gorenstein ring are both acceptable.
Cases (1) and (2) of Theorem \ref{main1 id stable} are obtained as a corollary of Theorem \ref{main thm of fid}.
If an $R$-module $M$ has finite injective dimension, then $\fid_R (M)=\spec (R)$, but the converse does not hold in general.
Indeed, an infinite dimensional Gorenstein ring $R$ has infinite injective dimension and $\fid_R (R)=\gor(R)=\spec (R)$; see \cite[Proposition 9]{Fu} for an example of such a ring.

\begin{cor}\label{main cor of fid}
Let $R$ be a ring and $I$ an ideal of $R$.
Let $M$ be a finitely generated $R$-module.
Put $\bar{R}=R/(I+\operatorname{Ann}_R (M))$.
Then there is an integer $k>0$ such that
$$
\id_{R_\p} (M/I^n M)_\p=\id_{R_\p} (M/I^k M)_\p
$$ 
for all integers $n\ge k$ and all prime ideals $\p$ of $R$ in each of the following cases.\\
{\rm(1)} $\bar{R}$ is acceptable.
{\rm(2)} $\bar{R}$ is quasi-excellent.
{\rm(3)} $\bar{R}$ is excellent.
{\rm(4)} $\bar{R}$ is a homomorphic image of a Gorenstein ring. 
{\rm(5)} $\fid_{R/J} (M)=\spec (R/J)$ for some ideal $J$ of $R$ which is contained in $\operatorname{Ann}_R (M)$.
{\rm(6)} $\fid_{R/J} (M/I^n M)=\spec (R/J)$ for some $n>0$ and some ideal $J$ of $R$ contained in $\operatorname{Ann}_R (M/I^n M)$.
\end{cor}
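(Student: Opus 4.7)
The strategy is to verify, in each of the six cases, the single hypothesis of Theorem~\ref{main thm of fid}: that $\gor(R/\p)$ contain a nonempty open subset of $\spec(R/\p)$ for every prime $\p\in\supp_R(M)\cap\V(I)$. A preliminary observation settles non-emptiness: since $\supp_R(M)\cap\V(I)=\V_R(I+\ann_R M)$, every such $\p$ contains the defining ideal of $\bar R$, making $R/\p=\bar R/\bar\p$ (with $\bar\p=\p/(I+\ann_R M)$) a domain, and its zero ideal is then a Gorenstein point of $\spec(R/\p)$. Hence $\gor(R/\p)\ne\emptyset$ holds automatically, and only the openness of $\gor(R/\p)$ must be checked.

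Cases (1)--(4) all reduce to openness of the Gorenstein locus on quotients of $\bar R$. Case (3) follows from (2) since excellent rings are quasi-excellent, and (2) follows from (1) since quasi-excellent rings are acceptable: the formal-fibre condition passes from ``regular'' to the weaker ``Gorenstein'' trivially, and openness of the Gorenstein locus for finitely generated algebras over a quasi-excellent ring is Greco--Marinari's theorem. For (4), every quotient $R/\p$ of a homomorphic image of a Gorenstein ring is again such an image, hence admits a dualizing complex, so $\gor(R/\p)$ is open by Sharp's theorem. In case (1), $R/\p$ is a finitely generated $\bar R$-algebra, so Definition~\ref{def rings} (with ``regular'' replaced by ``Gorenstein'') gives openness of $\gor(R/\p)$ directly. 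Theorem~\ref{main thm of fid} then produces the desired $k$.

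Cases (5) and (6) require a different approach, since no openness hypothesis on $\bar R$ is supplied; instead the hypothesis provides a finitely generated $R/J$-module (namely $M$ itself, or $M/I^{n_0}M$) whose finite-injective-dimension locus over $R/J$ is all of $\spec(R/J)$. My plan is to imitate the proof of Theorem~\ref{main thm of fid} but replace the appeal to Corollary~\ref{cor fid locus2} by a direct argument that each $\fid_R(I^iM/I^{i+1}M)$ is an open subset of $\spec(R)$ that stabilises in $i$. The key tool is the change-of-rings identity $\id_{R_\p}N_\p=\id_{(R/J)_{\bar\p}}N_{\bar\p}+\pd_{R_\p}(R/J)_\p$ for finitely generated $R/J$-modules $N$, which forces $\fid_R(N)=\PD_R(R/J)\cap\fid_{R/J}(N)$; since $\PD_R(R/J)$ is open by \cite[Lemma~1.1.3(1)]{RS}, it remains to check that the second factor is open and eventually independent of $i$ for the graded pieces. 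Proposition~\ref{homog comp fid locus}, \cite[Corollary~8]{Ko}, and Lemma~\ref{Unified lemma} then conclude the argument exactly as in the proof of Theorem~\ref{main thm of fid}. The principal difficulty is propagating finiteness of injective dimension from $M$ (or $M/I^{n_0}M$) to every $I^iM/I^{i+1}M$: since finite injective dimension is not preserved under subquotients in general, I expect this to require Bass's theorem (Lemma~\ref{id sum}(2)) to pin the support of the graded pieces inside a Cohen--Macaulay locus of $R/J$, together with a long-exact-sequence $\Ext$ argument along the filtration $I^iM/I^nM$ to control the Bass numbers uniformly in $i$.
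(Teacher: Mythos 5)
Your treatment of cases (1)--(4) is correct and essentially the paper's: in each case you verify the hypothesis of Theorem~\ref{main thm of fid}, namely that $\gor(R/\p)$ contains a nonempty open subset of $\spec(R/\p)$ for every $\p\in\supp_R(M)\cap\V(I)=\V(I+\ann_R M)$. The reduction chain $(3)\Rightarrow(2)\Rightarrow(1)$ together with the Greco--Marinari theorem for the implication quasi-excellent $\Rightarrow$ acceptable, and the dualizing-complex argument for $(4)$, are all sound, and the observation that the generic point of the domain $R/\p$ is always a Gorenstein point takes care of non-emptiness.

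For cases (5) and (6), however, you have missed the key idea, and the argument you sketch in its place has a genuine gap. You propose to bypass Corollary~\ref{cor fid locus2} and re-run the entire proof of Theorem~\ref{main thm of fid} by proving directly that $\fid_R(I^iM/I^{i+1}M)$ is open and stabilizes; you correctly flag the difficulty (finite injective dimension is not inherited by subquotients) but offer only a speculative plan to fix it, so what you have is not a proof. The paper instead observes that cases (5) and (6) \emph{already verify} the hypothesis of Theorem~\ref{main thm of fid}, via the degenerate graded case of Theorem~\ref{fid locus extension}. Concretely, take $A=A_0=R/J$ and view $M$ (resp.\ $M/I^{n_0}M$) as a finitely generated $A_0$-module; for any $\q\in\supp_R(M)\cap\V(I)$ set $\p=\q/J$. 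Then $\p\in\supp_{A_0}(M)\cap\fid_{A_0}(M)$, and the hypothesis $\fid_{R/J}(M)=\spec(R/J)$ (resp.\ for $M/I^{n_0}M$) makes condition (2) of Theorem~\ref{fid locus extension} hold trivially, since $\fid_{A_0}(M)$ then contains all of $\V_{A_0}(\p)$. The equivalence therefore gives condition (1): $\gor(A_0/\p)=\gor(R/\q)$ contains a nonempty open subset of $\spec(R/\q)$. Thus Theorem~\ref{main thm of fid} applies verbatim, with no need to re-prove openness of $\fid_R$ of the associated graded pieces. In short, the missing insight is that Theorem~\ref{fid locus extension} (with $A=A_0$) is precisely the bridge that converts the module-theoretic hypotheses in (5)/(6) into the ring-theoretic hypothesis of Theorem~\ref{main thm of fid}.
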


\begin{proof}
In any of the former four cases, the assertion follows immediately from Theorem \ref{main thm of fid}.
In the latter two cases, for any $\q\in\supp_{R}(M)\cap\V(I)$, 
$\q$ contains $J$ and $(R/J)/(\q/J)$ is isomorphic to $R/\q$.
Applying Theorem \ref{fid locus extension} to $A=A_0=R/J$ and $\p=\q/J$, we see that $\gor(R/\q)$ contains a nonempty open subset of $\spec (R/\q)$ by assumption.
The assertion follows from Theorem \ref{main thm of fid}.
\end{proof}

In the main results of \cite{Ki}, the (semi-)local case was immediately deduced from other cases.
Indeed, since the depth of fiber rings takes a finite value, the argument could be reduced to the complete case using \cite[Proposition 1.2.16]{BH}.
A similar formula \cite[Corollary 1]{FT} exists for injective dimension, but the same argument does not work as the injective dimension of fiber rings is not necessarily finite.
So, we aim to apply \cite[Theorem]{FT} instead of \cite[Corollary 1]{FT}.
For this purpose, we consider Bass numbers in the next section.

\section{Bass numbers and betti numbers}

Throughout this section, let $R$ be a ring, $I$ an ideal of $R$, and $M$ a finitely generated $R$-module.
In this section, we study the asymptotic behavior of Bass numbers and Betti numbers of $M/I^n M$.
For prime ideals $\p, \q$ of $R$ such that $\p\subseteq\q$ and an integer  $n\ge 0$, there are inequalities $\beta_n (\p,M)\le \beta_{n} (\q,M)$ and $\mu^n (\p,M)\le \mu^{n+\height(\q/\p)} (\q,M)$; see \cite[Theorem 5.1]{F}.
The following lemma asserts that equality holds if some assumptions are imposed.

\begin{lem}\label{sp seq bass betti}
Suppose that $(R, \m, k)$ is local.
Let $n\ge 0$ be an integer, and $\p\in\spec (R)$.
\begin{enumerate}[\rm(1)]
\item  Put $d=\dim (R/\p)$.
Suppose that $\Ext_{R}^i (R/\p, M)=0$ is a free $R/\p$-module for all $0\le i\le n+d$ and that $R/\p$ is Gorenstein.
Then $\mu^n (\p,M)=\mu^{n+d} (\m,M)$.
\item Suppose that $\Tor_i^{R} (R/\p, M)=0$ is a free $R/\p$-module for all $0\le i\le n$.
Then $\beta_n (\p,M)=\beta_{n} (\m,M)$.
\end{enumerate}
\end{lem}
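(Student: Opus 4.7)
The plan is to apply the Cartan--Eilenberg change-of-rings spectral sequences. Viewing $k$ as an $R/\p$-module via the surjection $R/\p\twoheadrightarrow k$, the composite $\Hom_R(k,-)=\Hom_{R/\p}(k,-)\circ\Hom_R(R/\p,-)$ produces a Grothendieck spectral sequence
\begin{equation*}
E_2^{p,q} = \Ext^p_{R/\p}\bigl(k,\,\Ext^q_R(R/\p, M)\bigr) \Longrightarrow \Ext^{p+q}_R(k, M),
\end{equation*}
legitimate because $\Hom_R(R/\p,-)$ sends injective $R$-modules to injective $R/\p$-modules. Similarly, from the composition $k\otimes_R - = k\otimes_{R/\p}(R/\p\otimes_R -)$, noting that $R/\p\otimes_R -$ sends free $R$-modules to free $R/\p$-modules, one obtains
\begin{equation*}
E^2_{p,q} = \Tor^{R/\p}_p\bigl(k,\,\Tor^R_q(R/\p, M)\bigr) \Longrightarrow \Tor^R_{p+q}(k, M).
\end{equation*}

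For part (1), localizing at $\p$ identifies the rank of the free $R/\p$-module $\Ext^q_R(R/\p, M)$ with $\mu^q(\p, M)$ for $0\le q\le n+d$, while the Gorenstein assumption collapses $\Ext^p_{R/\p}(k,R/\p)$ to $k$ when $p=d$ and to zero otherwise. Hence, in the horizontal strip $q\le n+d$, the $E_2$-page is concentrated on the single row $p=d$, with $E_2^{d,q}\cong k^{\mu^q(\p,M)}$. I would then check that $E_r^{d,n}$ is isolated: the outgoing differential $d_r\colon E_r^{d,n}\to E_r^{d+r,n-r+1}$ lands off row $d$ and still within the strip ($n-r+1\le n+d$), hence vanishes; the incoming differential $d_r\colon E_r^{d-r,n+r-1}\to E_r^{d,n}$ has source either in negative $p$ (trivially zero) or at row $p=d-r\ne d$ with $n+r-1\le n+d$ (also zero). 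A symmetric inspection kills every other $E_\infty^{p,q}$ with $p+q=n+d$. Reading off the filtration on $\Ext^{n+d}_R(k,M)$ then yields $\mu^{n+d}(\m,M)=\mu^n(\p,M)$.

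For part (2), localizing gives $\Tor^R_q(R/\p,M)\cong(R/\p)^{\beta_q(\p,M)}$ for $0\le q\le n$, and freeness of $R/\p$ over itself concentrates $E^2$ on the column $p=0$ within this strip, with $E^2_{0,q}=k^{\beta_q(\p,M)}$. A straightforward differential check—each source $(r,n-r+1)$ and target $(-r,n+r-1)$ of $d^r$ at the spot $(0,n)$ lies in a column $p\ne 0$ inside the free strip or outside the first quadrant—gives $E^\infty_{0,n}=k^{\beta_n(\p,M)}$ and vanishing elsewhere on the antidiagonal $p+q=n$, producing $\beta_n(\m,M)=\beta_n(\p,M)$.

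The only delicate bookkeeping is controlling the incoming differentials in part (1), which is precisely why the hypothesis is imposed up to degree $n+d$ rather than only $n$; everything else reduces to routine inspection of the two spectral sequences.
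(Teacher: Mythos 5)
Your proposal is correct and follows essentially the same route as the paper: both parts use the Grothendieck change-of-rings spectral sequences $\Ext^p_{R/\p}(k,\Ext^q_R(R/\p,M))\Rightarrow\Ext^{p+q}_R(k,M)$ and $\Tor^{R/\p}_p(k,\Tor^R_q(R/\p,M))\Rightarrow\Tor^R_{p+q}(k,M)$, with the freeness and Gorenstein hypotheses collapsing the $E_2$-page in the relevant range. The paper states the collapse and the resulting isomorphism in two lines; you have simply spelled out the differential bookkeeping explicitly, which is a reasonable elaboration of the same argument.
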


\begin{proof}
(1): There exists a spectral sequence $E_{2}^{p,q}=\Ext_{R/\p}^p (k, \Ext_{R}^q (R/\p, M))\Rightarrow H^{p+q}=\Ext_{R}^{p+q} (k, M)$.
Put $t=\mu^n (\p,M)$.
We have $E_{2}^{p,q}=0$ for any $p\ne d$ and any $0\le q\le n+d$ by assumption.
Hence we get $H^{n+d}\simeq E_{2}^{d,n}\simeq k^{\oplus t}$ as $R/\p$ is Gorenstein.

(2): There exists a spectral sequence $E_{p,q}^{2}=\Tor_p^{R/\p} (k, \Tor_q^{R} (R/\p, M))\Rightarrow H_{p+q}=\Tor_{p+q}^{R} (k, M)$.
The assertion can be shown in a similar way as in the proof of (1).
\end{proof}

The result below is useful to apply Lemma \ref{sp seq bass betti}.

\begin{lem}\label{free graded ext and tor}
Let $i\ge 0$ be an integer, and $\p$ a prime ideal of $R$.
Then there is $f\in R\setminus\p$ such that for any integer $n$, $\Ext_{R}^i (R/\p, M/I^n M)_f$ and $\Tor_i^{R} (R/\p, M/I^n M)_f$ are projective as $R_f/\p R_f$-modules.
\end{lem}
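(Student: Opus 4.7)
The plan is to package the relevant Ext and Tor modules for all $n$ into finitely generated graded modules over the Rees algebra $\mathcal{R} := R[It]$ (with $\mathcal{R}_0 = R$) and then apply generic freeness. The Rees module $\mathcal{M} := \bigoplus_{n \ge 0} I^n M$ is a finitely generated graded $\mathcal{R}$-module, so by Remark \ref{graded ext and f.g.} the modules $\bigoplus_n \Ext_R^j(R/\p, I^n M)$ and $\bigoplus_n \Tor_j^R(R/\p, I^n M)$ are finitely generated graded $\mathcal{R}$-modules; being annihilated by $\p$, they are in fact finitely generated over the graded $R/\p$-algebra $\mathcal{R}/\p\mathcal{R}$, whose degree-zero part is the noetherian domain $R/\p$.

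From the long exact sequence attached to $0 \to I^n M \to M \to M/I^n M \to 0$ I extract, for each $n$, the natural short exact sequence
\begin{equation*}
0 \to C_n \to \Ext_R^i(R/\p, M/I^n M) \to K_n \to 0,
\end{equation*}
where $K_n := \ker\bigl(\Ext_R^{i+1}(R/\p, I^n M) \to \Ext_R^{i+1}(R/\p, M)\bigr)$ and $C_n := \operatorname{Coker}\bigl(\Ext_R^i(R/\p, I^n M) \to \Ext_R^i(R/\p, M)\bigr)$. A routine functoriality check (using that for $y \in I^k$, the multiplication map $I^n M \to I^{n+k} M$ followed by the inclusion into $M$ equals the inclusion $I^n M \hookrightarrow M$ followed by multiplication by $y$) shows that $\bigoplus_n K_n$ is a graded $\mathcal{R}$-submodule of the finitely generated module $\bigoplus_n \Ext_R^{i+1}(R/\p, I^n M)$, hence is itself finitely generated. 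Applying generic freeness (\cite[Lemma 8.1]{HR} combined with \cite[Lemma 1.1.3(1)]{RS}, exactly as in the proof of Theorem \ref{fid locus extension}) to $\bigoplus_n K_n$ and to the analogous Tor kernel yields some $f_1 \in R\setminus\p$ making each $(K_n)_{f_1}$ free over $R_{f_1}/\p R_{f_1}$.

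For the cokernel, the images $J_n := \image\bigl(\Ext_R^i(R/\p, I^n M) \to \Ext_R^i(R/\p, M)\bigr)$ form a descending chain inside the finitely generated $R/\p$-module $\Ext_R^i(R/\p, M)$, hence stabilize by noetherianity. The family $\{C_n\}_n$ therefore assumes only finitely many distinct values, and ordinary generic freeness over the domain $R/\p$ applied to each such value, with the resulting elements multiplied together, supplies $f_2 \in R\setminus\p$ such that $(C_n)_{f_2}$ is free for every $n$. Setting $f := f_1 f_2$, the displayed short exact sequence localizes to one over the domain $R_f/\p R_f$ with both outer terms free; since the right-hand term is projective the sequence splits, and $\Ext_R^i(R/\p, M/I^n M)_f$ is free (in particular projective). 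The Tor assertion follows by a parallel argument using the short exact sequence
\begin{equation*}
0 \to C_n' \to \Tor_i^R(R/\p, M/I^n M) \to K_n' \to 0,
\end{equation*}
with $K_n' := \ker\bigl(\Tor_{i-1}^R(R/\p, I^n M) \to \Tor_{i-1}^R(R/\p, M)\bigr)$ and $C_n'$ the corresponding cokernel.

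The main obstacle is that $\bigoplus_n \Ext_R^i(R/\p, M/I^n M)$ does not itself carry an obvious finitely generated $\mathcal{R}$-module structure (because $\bigoplus_n M/I^n M$ is not finitely generated over $\mathcal{R}$), so generic freeness cannot be applied to it directly; splitting $\Ext_R^i(R/\p, M/I^n M)$ between a kernel piece (controlled by graded generic freeness on the Rees Ext) and a cokernel piece (controlled by noetherian stabilization, which reduces the question to finitely many separate applications of generic freeness) is the key maneuver.
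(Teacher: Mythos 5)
The kernel half of your argument is sound: the functoriality check is correct, $\bigoplus_n K_n$ is indeed a graded $\mathcal{R}/\p\mathcal{R}$-submodule of the finitely generated module $\bigoplus_n \Ext_R^{i+1}(R/\p, I^n M)$, hence finitely generated, and graded generic freeness applies. The cokernel half has a genuine gap. You claim that the descending chain $J_0 \supseteq J_1 \supseteq \cdots$ of submodules of the finitely generated $R/\p$-module $\Ext_R^i(R/\p, M)$ stabilizes ``by noetherianity.'' But noetherianity gives the ascending chain condition, not the descending one; descending chains of submodules of a finitely generated module over a noetherian ring need not stabilize. A concrete failure: take $R = \Z_{(p)}$, $I = pR$, $M = R$, $\p = (0)$, $i = 0$. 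Then $\Ext_R^0(R/\p, I^nM) = p^n R$, the map to $\Ext_R^0(R/\p, M) = R$ is the inclusion, so $J_n = p^n R$ is strictly descending and $C_n = R/p^n R$ takes infinitely many non-isomorphic values. The conclusion of the lemma still holds (take $f = p$, which kills everything), but your argument does not reach it, because the reduction to ``finitely many applications of generic freeness'' collapses. I do not see a quick patch: $\bigoplus_n C_n$ is a graded $\mathcal{R}/\p\mathcal{R}$-module, but it sits as a quotient of the non-finitely-generated module $\bigoplus_n \Ext_R^i(R/\p, M)$, so generic freeness cannot be applied to it directly either.

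The paper sidesteps this by using a different short exact sequence, namely $0 \to I^n M/I^{n+1}M \to M/I^{n+1}M \to M/I^n M \to 0$, summed over $n$. Applying $\Ext_R^i(R/\p, -)$ and extracting the map $g\colon \bigoplus_n \Ext_R^i(R/\p, M/I^{n+1}M) \to \bigoplus_n \Ext_R^i(R/\p, M/I^n M)$, one sees that $\ker g$ is a quotient of $\bigoplus_n \Ext_R^i(R/\p, I^nM/I^{n+1}M)$ and $\cok g$ is a submodule of $\bigoplus_n \Ext_R^{i+1}(R/\p, I^nM/I^{n+1}M)$, both finitely generated over $\mathcal{R}/\p\mathcal{R}$ by Remark~\ref{graded ext and f.g.}. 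Generic freeness applied once to each, combined with the trivial base case $\Ext_R^i(R/\p, M/I^0 M) = 0$, then gives the result by induction on $n$ using the four-term exact sequence. Everything used there is finitely generated, so no stabilization of descending chains is needed.
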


\begin{proof}
Let $A=\bigoplus_{n\ge 0}I^n$ be the Rees ring.
There is the natural exact sequence 
$$
0 \to 
\bigoplus_{n\ge 0} I^{n} M/I^{n+1} M \to 
\bigoplus_{n\ge 0}M/I^{n+1} M \to 
\bigoplus_{n\ge 0}M/I^n M 
\to 0
$$
of graded $A$-modules.
By Remark \ref{graded ext and f.g.}, we have an exact sequence
\begin{align*}
\bigoplus_{n\ge 0}\Ext_{R}^i (R/\p, I^{n} M/I^{n+1} & M) \to
\bigoplus_{n\ge 0}\Ext_{R}^i (R/\p, M/I^{n+1} M) \\
& \xrightarrow{g} \bigoplus_{n\ge 0}\Ext_{R}^i (R/\p, M/I^{n} M) \to
\bigoplus_{n\ge 0}\Ext_{R}^{i+1} (R/\p, I^{n} M/I^{n+1} M) 
\end{align*}
of graded $A/\p A$-modules.
Note that since $\bigoplus_{n\ge 0}\Ext_{R}^j (R/\p, I^{n} M/I^{n+1} M)$ are finitely generated $A/\p A$-modules for all $j\ge 0$, so are $\ker{g}$ and $\cok{g}$.
As $A/\p A$ is finitely generated $R/\p$-algebra, it follow from \cite[Lemma 8.1]{HR} that there is $f\in R\setminus\p$ such that $(\ker{g})_f$ and $(\cok{h})_f$ are free as $R_f/\p R_f$-modules.
There is an exact sequence 
$$
0 \to (\ker{g})_f \to \bigoplus_{n\ge 0}\Ext_{R}^i (R/\p, M/I^{n+1} M)_f \to \bigoplus_{n\ge 0}\Ext_{R}^i (R/\p, M/I^{n} M)_f \to (\cok{g})_f \to 0
$$
of graded $A_f/\p A_f$-modules.
Considering each homogeneous part, by induction on $n$, we conclude for all $n$ that $\Ext_{R}^i (R/\p, M/I^{n} M)_f$ are projective $R_f/\p R_f$-modules.
A dual proof works for the Tor modules.
\end{proof}

Combining the above two lemmas, we obtain the following corollary.
It plays an essential role in the proof of Proposition \ref{finite poly bass gor}, which is one of the main results of this section.

\begin{cor}\label{open bass and betti}
Let $i\ge 0$ be an integer, and $\p$ a prime ideal of $R$.
\begin{enumerate}[\rm(1)]
\item  Suppose that $\gor(R/\p)$ contains a nonempty open subset of $\spec (R/\p)$ and $\dim (R/\p)<\infty$.
Then there exists an open subset $U$ of $\spec(R)$ such that $\p\in U$ and 
$\mu^i (\p,M/I^n M)=\mu^{i+\height(\q/\p)} (\q,M/I^n M)$ for all $\q\in U\cap\V(\p)$ and all $n>0$.
\item There exists an open subset $U$ of $\spec(R)$ such that $\p\in U$ and  $\beta_i (\p,M/I^n M)=\beta_i (\q,M/I^n M)$ for all $\q\in U\cap\V(\p)$ and all $n>0$.
\end{enumerate}
\end{cor}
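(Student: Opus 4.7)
The plan is to reduce both parts to the local setting of Lemma \ref{sp seq bass betti} by using Lemma \ref{free graded ext and tor} to guarantee freeness of the relevant $\Ext$ and $\Tor$ modules after inverting a single element of $R\setminus\p$ uniformly in $n$, and by exploiting that $R/\p$ is a domain so $\spec(R/\p)$ is irreducible with generic point corresponding to $\p$.

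For part (1), set $d=\dim(R/\p)$, which is finite by hypothesis. For each $0\le j\le i+d$, Lemma \ref{free graded ext and tor} yields $f_j\in R\setminus\p$ such that $\Ext_R^j(R/\p,M/I^nM)_{f_j}$ is projective over $R_{f_j}/\p R_{f_j}$ for every $n>0$; the product $f=f_0f_1\cdots f_{i+d}$ then does the job for all $j$ in this range simultaneously. The nonempty open subset of $\spec(R/\p)$ contained in $\gor(R/\p)$ given by the hypothesis must contain the generic point $(0)$, since $R/\p$ is a domain and hence $\spec(R/\p)$ is irreducible; thus this open lifts to an open $W\subseteq\spec(R)$ with $\p\in W$ such that $R_\q/\p R_\q$ is Gorenstein for every $\q\in W\cap\V(\p)$. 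I take $U=W\cap\D_R(f)$. For any $\q\in U\cap\V(\p)$, localizing at $\q$ turns the finitely generated projective modules $\Ext_R^j(R/\p,M/I^nM)_f$ into free $R_\q/\p R_\q$-modules in the range $0\le j\le i+\height(\q/\p)\le i+d$, so Lemma \ref{sp seq bass betti}(1) applied to $(R_\q,\q R_\q,\kappa(\q))$ delivers the required equality.

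Part (2) is analogous but strictly simpler. Applying Lemma \ref{free graded ext and tor} to $\Tor_j^R(R/\p,M/I^nM)$ for $0\le j\le i$ and forming the corresponding product, I obtain $f\in R\setminus\p$ making all these $\Tor$ modules projective over $R_f/\p R_f$ uniformly in $n>0$. Setting $U=\D_R(f)$ and localizing at any $\q\in U\cap\V(\p)$, Lemma \ref{sp seq bass betti}(2) yields $\beta_i(\p,M/I^nM)=\beta_i(\q,M/I^nM)$; neither a Gorenstein condition nor a finite-dimension assumption is needed, which accounts for the weaker hypotheses in (2) compared with (1).

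The main technical point will be the uniformity in $n$: for any single fixed $n$, generic freeness of a finitely generated module is routine, but producing one element $f\in R\setminus\p$ that works simultaneously for all $n>0$ is exactly what Lemma \ref{free graded ext and tor} supplies via the Rees-ring graded-module formalism, which I invoke as a black box. A minor secondary point in (1) is lifting the open subset of $\spec(R/\p)$ to an open of $\spec(R)$ containing $\p$, where the fact that $R/\p$ is a domain (so its spectrum is irreducible) is essential.
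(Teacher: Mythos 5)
Your proof is correct and follows essentially the same route as the paper: invert a single element $f\in R\setminus\p$ (obtained from Lemma \ref{free graded ext and tor} for finitely many cohomological degrees, combined with the Gorenstein hypothesis via irreducibility of $\spec(R/\p)$) so that the relevant $\Ext$ or $\Tor$ modules become projective over $R_f/\p R_f$ uniformly in $n$, then localize at $\q\in\D(f)\cap\V(\p)$ and apply Lemma \ref{sp seq bass betti}. The paper's proof is terser but relies on the exact same ingredients.
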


\begin{proof}
(1): We can choose $f\in R\setminus\p$ such that $R_f/\p R_f$ is Gorenstein, and $\Ext_{R}^j (R/\p, M/I^n M)_f$ are projective as $R_f/\p R_f$-modules for all $0\le j\le i+\dim (R/\p)$ and all $n>0$ by Lemma \ref{free graded ext and tor}.
Then $\p$ belongs to the open subset $U=\D(f)$ of $\spec(R)$.
The assertion follows from Lemma \ref{sp seq bass betti}.

(2): It can be shown in a similar way as in the proof of (1).
\end{proof}

In Corollary \ref{open bass and betti}(1), $\height(\q/\p)$ appears as a superscript, however, we would like to write the Bass numbers of $M/I^n M$ with respect to $\q$ without using the terms of other prime ideal $\p$.
Proposition \ref{finite poly bass gor} does makes it happen.

\begin{prop}\label{finite poly bass gor}
Let $i$ be an integer.
Suppose that $\bar{R}=R/(I+\operatorname{Ann}_R (M))$ is a finite-dimensional catenary ring, and $\gor(\bar{R}/\p\bar{R})$ contains a nonempty open subset of $\spec (\bar{R}/\p\bar{R})$ for any $\p\in\supp_R(\bar{R})$.
Then there exist polynomials $\varphi_1, \ldots, \varphi_l \in\Q[x]$ and $k>0$ such that the following condition is satisfied:
for any $\p\in\supp_R(\bar{R})$, there is $1\le j\le l$ such that $\mu^{i+\height(\p\bar{R})} (\p,M/I^n M)=\varphi_j (n)$ for all $n\ge k$.
\end{prop}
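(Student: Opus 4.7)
The plan is to prove, by noetherian induction on closed subsets $V\subseteq\supp_R(\bar{R})=\V(I+\ann_R(M))$, the strengthened claim that for each such $V$ there exist polynomials $\varphi_1,\ldots,\varphi_l\in\Q[x]$ and $k>0$ with the property that for every $\p\in V$, some $\varphi_j$ agrees with $\mu^{i+h(\p)}(\p,M/I^n M)$ for all $n\ge k$, where $h(\p):=\height(\p\bar{R})$. The proposition is the case $V=\supp_R(\bar{R})$, and noetherianity of $\spec R$ ensures the induction terminates.

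For the inductive step, let $\p_1,\ldots,\p_r$ be the minimal primes of $V$. Each $\p_j$ lies in $\supp_R(\bar{R})$, so $R/\p_j=\bar{R}/\p_j\bar{R}$ has Gorenstein locus containing a nonempty open subset of $\spec(R/\p_j)$, and $\dim(R/\p_j)\le\dim\bar{R}<\infty$. Applying Corollary~\ref{open bass and betti}(1) to $\p_j$ with initial index $i+h(\p_j)$ produces an open set $U_j\ni\p_j$ satisfying
\[
\mu^{i+h(\p_j)}(\p_j,M/I^n M)=\mu^{i+h(\p_j)+\height(\q/\p_j)}(\q,M/I^n M)
\]
for all $\q\in U_j\cap\V(\p_j)$ and $n>0$. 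Localizing at $\p_j$, the local asymptotic polynomial-growth result \cite[Corollary~8]{Ko} then provides a polynomial $\varphi_j\in\Q[x]$ and $k_j>0$ with $\mu^{i+h(\p_j)}(\p_j,M/I^n M)=\varphi_j(n)$ for $n\ge k_j$.

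Put $W_j:=\{\q\in V\cap U_j\cap\V(\p_j) : h(\q)=h(\p_j)+\height(\q/\p_j)\}$; each $\p_j\in W_j$, and for $\q\in W_j$ the display above gives $\mu^{i+h(\q)}(\q,M/I^n M)=\varphi_j(n)$ for $n\ge k_j$. Let $\mathrm{BAD}:=V\setminus\bigcup_j W_j$ and define $V':=\overline{\mathrm{BAD}}\cap V$. To apply the inductive hypothesis I would verify $V'\subsetneq V$: for each $j$, $\mathrm{BAD}\cap\V(\p_j)$ decomposes as $(\V(\p_j)\setminus U_j)$---closed and proper in $\V(\p_j)$ since $\p_j\in U_j$---together with primes $\q\in U_j\cap\V(\p_j)$ with $h(\q)>h(\p_j)+\height(\q/\p_j)$. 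Catenarity of $\bar{R}$ will show that any height-realizing minimal prime $\p_0$ of $\bar{R}$ for such a $\q$ must satisfy $\p_0\not\subseteq\p_j$ (otherwise $\height(\q/\p_0)=\height(\p_j/\p_0)+\height(\q/\p_j)\le h(\p_j)+\height(\q/\p_j)$, contradicting $h(\q)>h(\p_j)+\height(\q/\p_j)$), so $\q\in\V(\p_j+\p_0)\subsetneq\V(\p_j)$. Hence $\mathrm{BAD}\cap\V(\p_j)$ lies in a finite union of proper closed subsets of $\V(\p_j)$, so $\p_j\notin\overline{\mathrm{BAD}}$. Applying the inductive hypothesis to $V'$ yields polynomials $\psi_1,\ldots,\psi_m$ and $k'>0$; the combined family $\{\varphi_1,\ldots,\varphi_r,\psi_1,\ldots,\psi_m\}$ with $k:=\max(k_1,\ldots,k_r,k')$ completes the inductive step.

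The main obstacle will be the catenarity-based confinement of the bad primes described above, which hinges on the dimension identity $\height(\q/\p_0)=\height(\p_j/\p_0)+\height(\q/\p_j)$ available in a catenary ring whenever $\p_0\subseteq\p_j\subseteq\q$. With this topological fact established, the noetherian induction proceeds routinely, and the choice $V=\supp_R(\bar{R})$ yields the proposition.
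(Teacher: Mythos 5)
Your proof is correct and takes essentially the same approach as the paper: both rely on Kodiyalam's local polynomial-growth result, the Bass-number transfer of Corollary~\ref{open bass and betti}(1), and catenarity of $\bar{R}$ to obtain the height additivity $\height(\p\bar{R})+\height(\q/\p)=\height(\q\bar{R})$ on a locally closed piece around each minimal prime of the uncovered region, with noetherianity of $\spec(R)$ guaranteeing termination. The only difference is bookkeeping: the paper grows an ascending chain of open subsets $X_0\subsetneq X_1\subsetneq\cdots$ (peeling off one minimal prime of the complement at a time, cutting by $\D(f)\cap\D(g)\cap U$ to isolate the relevant minimal prime of $\bar{R}$), whereas you shrink a descending chain of closed subsets via noetherian induction, handling all minimal primes of $V$ at once and showing the bad locus is confined to $(\V(\p_j)\setminus U_j)\cup\bigcup_{\p_0\not\subseteq\p_j}\V(\p_0+\p_j)$.
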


\begin{proof}
Put $X_0=\spec(R)\setminus\supp_R(\bar{R})$.
Since any ascending chain of open subsets of $\spec(R)$ stabilizes, it suffices to show the following claim.
Indeed, if the claim holds, then for some $l\ge 0$, there are polynomials $\varphi_1, \ldots, \varphi_l \in\Q[x]$, positive integers $k_1, \ldots, k_l$, and an ascending chain of open subsets $X_0\subsetneq X_1, \subsetneq \ldots, \subsetneq X_l=\spec(R)$ of $\spec(R)$ such that for any $1\le j\le l$, any $n\ge k_j$, and any $\q\in X_j\setminus X_{j-1}$, the equality $\mu^{i+\height(\q\bar{R})} (\q,M/I^n M)=\varphi_j (n)$ holds.
Put $k={\rm max}\{k_1, \ldots, k_l \}$.
Then for any $\p\in\supp_R(\bar{R})$, there is $1\le j\le l$ such that $\p\in X_j\setminus X_{j-1}$, and hence for any $n\ge k$, we have $\mu^{i+\height(\p\bar{R})} (\p,M/I^n M)=\varphi_j (n)$.
\begin{spacing}{1.3}
\end{spacing}
\noindent \textbf{Claim.} 
Let $X$ be an open subset of $\spec(R)$.
If $X_0\subseteq X\subsetneq\spec(R)$, then there is an open subset $Y$ of $\spec(R)$, a polynomial $\varphi\in\Q[x]$, and $k>0$ such that $X\subsetneq Y$, and $\mu^{i+\height(\q\bar{R})} (\q,M/I^n M)=\varphi (n)$ for all $n\ge k$ and all $\q\in Y\setminus X$.
\begin{spacing}{1.3}
\end{spacing}
\noindent \textit{Proof of Claim.}
We write $X=\D(J)$.
Since $J\ne R$, there is a minimal prime ideal $\p$ of $J$.
Note that $\p$ is not in $X$ and thus belongs to $\supp_R(\bar{R})$.
It follows from \cite[Corollary 7]{Ko} that there is a polynomial $\varphi\in\Q[x]$ and $k>0$ such that $\mu^{i+\height(\p\bar{R})} (\p,M/I^n M)=\varphi (n)$ for all $n\ge k$.
We can take $f\in R\setminus\p$ such that $\sqrt{J R_f}=\p R_f$; see \cite[Lemma 2.7(4)]{K}.
Also, we choose $g\in R\setminus\p$ such that it belongs to any minimal prime ideal of $I+\operatorname{Ann}_R (M)$ which is not contained in $\p$.
For all $\q\in \D(g)\cap\V(\p)$, we have $\height(\p\bar{R})+\height(\q/\p)=\height(\q\bar{R})$ as $\bar{R}$ is catenary.
Corollary \ref{open bass and betti}(1) implies that there exists an open subset $U$ of $\spec(R)$ such that $\p\in U$ and 
$\mu^{i+\height(\p\bar{R})} (\p,M/I^n M)=\mu^{i+\height(\p\bar{R})+\height(\q/\p)} (\q,M/I^n M)$ for all $\q\in U\cap\V(\p)$ and all $n>0$.
Then $Y=X\cup (\D(f)\cap\D(g)\cap U)$ is open.
Since $\p$ is in $Y\setminus X$, we get $X\subsetneq Y$.
Let $\q\in Y\setminus X$.
It is seen that $Y\setminus X=\V(J)\cap(\D(f)\cap\D(g)\cap U)=\V(\p)\cap\D(g)\cap U$.
Therefore, the equality $\mu^{i+\height(\q\bar{R})} (\q,M/I^n M)=\varphi (n)$ holds for any $n\ge k$.
\end{proof}

The result below can be obtained from Proposition \ref{finite poly bass gor}.

\begin{cor}\label{finite poly bass acceptable}
Let $s\ge 0$ be an integer.
If $\bar{R}=R/(I+\operatorname{Ann}_R (M))$ is acceptable and of finite dimension, then there exist polynomials $\varphi_1, \ldots, \varphi_l \in\Q[x]$ and $k>0$ such that the following condition is satisfied:
for any prime ideal $\p$ of $R$, there exists $1\le j\le l$ such that $\mu^{s} (\p,M/I^n M)=\varphi_j (n)$ for all $n\ge k$.
\end{cor}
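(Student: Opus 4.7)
The plan is to derive the corollary directly from Proposition \ref{finite poly bass gor} by applying it for each possible value of the height $h = \height(\p\bar{R})$ and taking the finite union of the resulting polynomial families.

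First I verify the hypotheses of Proposition \ref{finite poly bass gor}. Since $\bar{R}$ is acceptable it is universally catenary, and it is finite-dimensional by assumption. For any $\p \in \supp_R(\bar{R})$ the ring $\bar{R}/\p\bar{R} \cong R/\p$ is a domain, so its generic point has Gorenstein localization; moreover $\gor(\bar{R}/\p\bar{R})$ is open by acceptability applied to the finitely generated $\bar{R}$-algebra $\bar{R}/\p\bar{R}$, so this locus contains a nonempty open subset. For primes $\p \notin \supp_R(\bar{R})$ the condition $I + \ann_R(M) \not\subseteq \p$ forces either $IR_\p = R_\p$ or $M_\p = 0$, hence $(M/I^n M)_\p = 0$ and $\mu^s(\p, M/I^n M) = 0$ for every $n$, so the zero polynomial handles these primes.

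Now set $d = \dim \bar{R}$. For each integer $h$ with $0 \le h \le d$, I apply Proposition \ref{finite poly bass gor} with $i = s - h$ to obtain polynomials $\varphi_{h,1}, \ldots, \varphi_{h, l_h} \in \Q[x]$ and $k_h > 0$ such that, for every $\p \in \supp_R(\bar{R})$, some $j$ satisfies
\[
\mu^{(s-h) + \height(\p\bar{R})}(\p, M/I^n M) = \varphi_{h,j}(n) \quad \text{for all } n \ge k_h.
\]
For any $\p \in \supp_R(\bar{R})$, choosing $h = \height(\p\bar{R}) \in \{0, \ldots, d\}$ makes the exponent on the left equal to $s$, so some $\varphi_{h,j}$ describes $\mu^s(\p, M/I^n M)$. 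The corollary then follows with $k = \max_h k_h$ and the finite collection $\{0\} \cup \bigcup_{h=0}^{d} \{\varphi_{h,1}, \ldots, \varphi_{h, l_h}\}$.

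The main obstacle is that for $h > s$ the parameter $i = s - h$ is negative. The statement of Proposition \ref{finite poly bass gor} is formulated for arbitrary integer $i$, but in its proof Corollary \ref{open bass and betti}(1) is invoked with the index $i + \height(\p\bar{R})$, which may itself be negative when the minimal prime $\p$ arising in the Noetherian induction has small height. One must therefore verify that this corollary remains valid for negative indices, with both sides of the asserted equality vanishing. The left-hand side vanishes by the definition of Bass numbers, while the vanishing of the right-hand side follows from the spectral sequence in Lemma \ref{sp seq bass betti}(1): every $E_2^{p,q}$ on the anti-diagonal $p+q = i + \height(\q\bar{R})$ is zero, either because $q < 0$, or because $0 \le q \le i + \height(\q\bar{R})$ (where freeness of $\Ext_R^q(R/\p, M/I^n M)$ has been arranged via Lemma \ref{free graded ext and tor}) and the Gorensteinness of $R_\q/\p R_\q$ forces $E_2^{p,q} = 0$ for $p \ne \height(\q/\p)$. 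Hence the zero polynomial correctly describes the Bass numbers in this regime, and the Noetherian induction in Proposition \ref{finite poly bass gor} yields the required polynomial family exactly as for non-negative $i$.
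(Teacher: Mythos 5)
Your proof is correct and takes essentially the same approach as the paper: it applies Proposition \ref{finite poly bass gor} over the finite range of indices $i = s - h$ for $0 \le h \le \dim\bar{R}$ (the paper uses the slightly wider but equivalent range $-\dim(\bar{R}) \le i \le s$) and handles primes outside $\supp_R(\bar{R})$ with the zero polynomial. Your extra verification that Corollary \ref{open bass and betti}(1) and hence Proposition \ref{finite poly bass gor} remain valid when the index is negative, with both sides of the relevant equalities vanishing, is a useful check that the paper leaves implicit.
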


\begin{proof}
It follows from Proposition \ref{finite poly bass gor} that there are polynomials $\varphi_1, \ldots, \varphi_l \in\Q[x]$ and $k>0$ such that for any $\p\in\supp_R(\bar{R})$ and any integer $-\dim(\bar{R})\le i\le s$, there exists $1\le j\le l$ such that for all $n\ge k$, $\mu^{i+\height(\p\bar{R})} (\p,M/I^n M)=\varphi_j (n)$.
Let $\p$ be a prime ideal of $R$.
If $\p\in\supp_R(\bar{R})$, then $-\dim(\bar{R})\le s-\height(\p\bar{R})\le s$, and thus there is $1\le j\le l$ such that $\mu^{s} (\p,M/I^n M)=\varphi_j (n)$ for all $n\ge k$.
Otherwise, we have $\mu^{s} (\p,M/I^n M)=0$ for any $n>0$.
\end{proof}

If $R$ is a complete local ring, then the assumption in Corollary \ref{finite poly bass acceptable} is satisfied.
Below is the formula for Bass numbers mentioned at the end of Section 2.
We use it to investigate the Bass number of a module over a local ring that is not necessarily complete.

\begin{lem}\cite[Theorem]{FT}\label{FT bass}
Let $S$ be a ring, $\varphi: R\to S$ a flat ring homomorphism, and $\q$ a prime ideal of $S$.
Put $\p=\q\cap R$.
Then for all integers $n$, there is an equality 
$$
\mu_{S}^{n} (\q,M\otimes_{R} S) = \sum_{p+q=n} \mu_{R}^{p} (\p,M)
\mu_{S/\p S}^{q} (\q/\p S,S/\p S).
$$
\end{lem}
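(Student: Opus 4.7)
The plan will be to reduce to the local case and then combine flat base change for Ext with a formality argument in the derived category of $S/\p S$-modules.

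First, I would localize $R$ at $\p$ and $S$ at $\q$. The induced map $R_\p\to S_\q$ is still flat, satisfies $\q S_\q\cap R_\p=\p R_\p$, and preserves every quantity appearing in the formula, so one may assume $(R,\p)$ and $(S,\q)$ are both local with $\p S\subseteq \q$; then $S/\p S$ is local with maximal ideal $\q/\p S$ and residue field $\kappa(\q)$. The claim reduces to computing $\Ext^n_S(\kappa(\q),M\otimes_R S)$.

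Next, I would exploit the factorization $\Hom_S(\kappa(\q),-)=\Hom_{S/\p S}(\kappa(\q),\Hom_S(S/\p S,-))$. Since $\Hom_S(S/\p S,-)$ carries $S$-injectives to $(S/\p S)$-injectives, one obtains the derived identity
$$R\Hom_S(\kappa(\q),M\otimes_R S)\cong R\Hom_{S/\p S}(\kappa(\q),R\Hom_S(S/\p S,M\otimes_R S))$$
in $D(S/\p S)$. Taking a free $R$-resolution $F^\bullet$ of $\kappa(\p)$ with each $F^n$ finitely generated, flatness of $\varphi$ makes $F^\bullet\otimes_R S$ a free $S$-resolution of $\kappa(\p)\otimes_R S=S/\p S$; the adjunction $\Hom_S(F^n\otimes_R S,-)=\Hom_R(F^n,-)$ together with finite presentation then yields
$$\Ext^q_S(S/\p S,M\otimes_R S)\cong \Ext^q_R(\kappa(\p),M)\otimes_R S\cong (S/\p S)^{\mu^q_R(\p,M)},$$
a free $S/\p S$-module.

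The main step, and the one I expect to demand the most care, is a formality argument. The complex $R\Hom_S(S/\p S,M\otimes_R S)$, represented by $\Hom_S(S/\p S,J^\bullet)$ for an $S$-injective resolution $J^\bullet$ of $M\otimes_R S$, is a bounded-below complex of $S/\p S$-modules whose cohomology is free (hence projective) over $S/\p S$. Its Postnikov triangles $\tau^{<n} X\to \tau^{\le n} X\to H^n(X)[-n]\to \tau^{<n}X[1]$ split one by one, because the connecting maps lie in $\Ext^{n+1}_{S/\p S}(H^n(X),\tau^{<n}X)$, and the hypercohomology spectral sequence forces this group to vanish whenever $H^n(X)$ is projective and $\tau^{<n}X$ has no cohomology in degree $n+1$. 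Induction along the filtration produces a quasi-isomorphism
$$R\Hom_S(S/\p S,M\otimes_R S)\;\cong\;\bigoplus_{q}(S/\p S)^{\mu^q_R(\p,M)}[-q]$$
in $D(S/\p S)$. Applying $R\Hom_{S/\p S}(\kappa(\q),-)$ and taking $n$-th cohomology splits the result as $\bigoplus_{p+q=n}\Ext^p_{S/\p S}(\kappa(\q),S/\p S)^{\mu^q_R(\p,M)}$, and comparing $\kappa(\q)$-dimensions gives the desired equality. The delicate point is the formality step, but it follows transparently from the projectivity of the cohomology.
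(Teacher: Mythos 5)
The paper imports this statement verbatim from Foxby--Thorup without proof, so the comparison is with their original argument rather than anything internal to the paper. Your proof is correct, but it takes a genuinely different route. Foxby and Thorup argue directly with minimal injective resolutions, tracking how indecomposable injectives and their multiplicities transform under the flat base change $-\otimes_R S$; their theorem is really a statement about the structure of the minimal injective resolution of $M\otimes_R S$, with the Bass-number formula as a corollary. You instead reduce to the local case and combine three ingredients: the derived adjunction $R\Hom_S(\kappa(\q),-)\cong R\Hom_{S/\p S}(\kappa(\q),R\Hom_S(S/\p S,-))$, flat base change for Ext identifying $\Ext^q_S(S/\p S,M\otimes_R S)$ with the free $S/\p S$-module $(S/\p S)^{\mu^q_R(\p,M)}$, and formality of a bounded-below complex with projective cohomology. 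The formality step is correct as stated; it can even be done more simply by using projectivity of each $H^q(X)$ to split the cocycle surjection $Z^q(X)\twoheadrightarrow H^q(X)$, which assembles into a quasi-isomorphism $\bigoplus_q H^q(X)[-q]\to X$ with no Postnikov bookkeeping. One point worth making explicit is that $R\Hom_{S/\p S}(\kappa(\q),-)$ commutes with the possibly infinite direct sum $\bigoplus_q H^q(X)[-q]$; this holds because $\kappa(\q)$ admits a resolution by finitely generated free $S/\p S$-modules over the noetherian local ring $S/\p S$. In effect your proof is the change-of-rings spectral sequence of Lemma~\ref{sp seq bass betti} in derived-category dress; what it buys is a conceptual, resolution-free derivation of the numerical formula, whereas the Foxby--Thorup approach yields finer structural information about the injective resolution of $M\otimes_R S$ itself.
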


There is an upper bound on the Bass numbers of formal fibers for all prime ideals.
When $R$ is local, we use $\hat{R}$ and $\hat{M}$ to denote the completion of $R$ and $M$, respectively.

\begin{cor}\label{finite poly bass gor cor}
Let $i\ge 0$ be an integer.
Suppose that $R$ is local.
Then there is an integer $N>0$ such that 
$\mu_{\hat{R}/\p \hat{R}}^{i+\depth \hat{R}_\q/\p\hat{R}_\q} (\q/\p\hat{R},\hat{R}/\p\hat{R})\le N$ for all prime ideals $\q$ of $\hat{R}$ and $\p=\q\cap R$.
\end{cor}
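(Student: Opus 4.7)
The plan is to apply Lemma \ref{FT bass} to the flat completion map $R\to\hat R$ with $M=R$, extract from the sum a single ``corner'' summand that contains the desired formal-fiber Bass number, and then bound the resulting Bass number of $\hat R$ itself uniformly in $\q$ via Corollary \ref{finite poly bass acceptable}.

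Fix $\q\in\spec\hat R$ and write $\p=\q\cap R$, $d=\depth R_\p$, $e=\depth\hat R_\q/\p\hat R_\q$. The induced local homomorphism $R_\p\to\hat R_\q$ is flat with closed fiber $(\hat R/\p\hat R)_\q$, so the depth formula for flat local extensions \cite[Proposition 1.2.16]{BH} gives $\depth\hat R_\q=d+e$. Applied with $M=R$ at $n=i+d+e$, Lemma \ref{FT bass} yields
$$
\mu_{\hat R}^{i+d+e}(\q,\hat R)=\sum_{p+q=i+d+e}\mu_R^{p}(\p,R)\,\mu_{\hat R/\p\hat R}^{q}(\q/\p\hat R,\hat R/\p\hat R).
$$
Because depth is the infimum of the non-vanishing Bass numbers, every summand with $p<d$ or $q<e$ vanishes, so the indices are constrained to $p\in[d,d+i]$ and $q\in[e,e+i]$. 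The corner $(p,q)=(d,i+e)$ contributes $\mu_R^{d}(\p,R)\cdot\mu^{i+e}_{\hat R/\p\hat R}(\q/\p\hat R,\hat R/\p\hat R)$, whose first factor is the type of the nonzero local ring $R_\p$ and therefore a positive integer. Non-negativity of the remaining summands then gives
$$
\mu^{i+\depth\hat R_\q/\p\hat R_\q}_{\hat R/\p\hat R}(\q/\p\hat R,\hat R/\p\hat R)\le\mu^{i+\depth\hat R_\q}_{\hat R}(\q,\hat R).
$$

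It thus suffices to bound $\mu^{i+\depth\hat R_\q}_{\hat R}(\q,\hat R)$ uniformly in $\q$. Since $\depth\hat R_\q\le\dim\hat R<\infty$, the exponent $s:=i+\depth\hat R_\q$ ranges over the finite set $\{i,\ldots,i+\dim\hat R\}$. For each such $s$, apply Corollary \ref{finite poly bass acceptable} to the data $(\hat R,0,\hat R)$: the quotient $\hat R/(0+\ann\hat R)=\hat R$ is complete local, hence acceptable and of finite dimension, so there exist polynomials $\varphi_1,\ldots,\varphi_l\in\Q[x]$ and $k>0$ such that for every $\q$ some $\varphi_j$ satisfies $\mu^s(\q,\hat R/0^n)=\varphi_j(n)$ for all $n\ge k$. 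Because $\hat R/0^n=\hat R$ for $n\ge 1$, each $\varphi_j$ is forced to be a constant polynomial, so $\mu^s(\q,\hat R)\in\{\varphi_1(k),\ldots,\varphi_l(k)\}$ lies in a finite set. Taking the maximum over the finitely many admissible values of $s$ produces the required integer $N$.

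The main obstacle is identifying the correct shift: the exponent $i+\depth\hat R_\q/\p\hat R_\q$ in the statement is exactly what aligns the target fiber Bass number with the corner of the Foxby--Takahashi sum whose Bass-number coefficient $\mu_R^{\depth R_\p}(\p,R)$ is automatically nonzero. Once this alignment is spotted, the corollary reduces to producing a uniform bound on Bass numbers of $\hat R$ itself, which is a clean application of the polynomial-growth statement in the degenerate case $I=0$.
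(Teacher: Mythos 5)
Your proof is correct and takes essentially the same route as the paper: you apply Lemma \ref{FT bass} to the flat map $R\to\hat R$, isolate the summand with the automatically nonzero type $\mu_R^{\depth R_\p}(\p,R)$ to obtain the inequality, and then bound $\mu_{\hat R}^{i+\depth\hat R_\q}(\q,\hat R)$ uniformly via the polynomial-growth result in the degenerate case $I=0$. The only cosmetic difference is that you cite Corollary \ref{finite poly bass acceptable} and explicitly range over the finitely many admissible exponents $s\in\{i,\dots,i+\dim\hat R\}$, where the paper directly invokes Proposition \ref{finite poly bass gor}.
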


\begin{proof}
Note that $r_{\hat{R}}(0, \hat{R}, \q)=\depth \hat{R}_\q$ for any prime ideal $\q$ of $\hat{R}$.
We can apply Proposition \ref{finite poly bass gor} to see that $\{\mu_{\hat{R}}^{i+\depth \hat{R}_\q} (\q,\hat{R}) \mid \q\in\spec(\hat{R}) \}$ is a finite set.
By Lemma \ref{FT bass}, we have an inequality
$$
\mu_{\hat{R}}^{i+\depth \hat{R}_\q} (\q,\hat{R}) \ge \mu_{R}^{\depth R_\p} (\p,R)\mu_{\hat{R}/\p \hat{R}}^{i+\depth \hat{R}_\q/\p\hat{R}_\q} (\q/\p\hat{R},\hat{R}/\p\hat{R})
\ge \mu_{\hat{R}/\p \hat{R}}^{i+\depth \hat{R}_\q/\p\hat{R}_\q} (\q/\p\hat{R},\hat{R}/\p\hat{R})
$$
for any prime ideal $\q$ of $\hat{R}$ and $\p=\q\cap R$.
\end{proof}

This corollary can be shown by the same proof using \cite[Theorem 5.1]{F} instead of Proposition \ref{finite poly bass gor}.
We are now ready to prove Theorem \ref{main2 finite poly bass and betti}(1) in case (ii).

\begin{thm}\label{finite poly bass local}
Let $s\ge 0$ be an integer.
Suppose that $R/(I+\operatorname{Ann}_R (M))$ is semi-local.
Then there exist polynomials $\varphi_1, \ldots, \varphi_l \in\Q[x]$ and $k>0$ such that the following condition is satisfied:
for any prime ideal $\p$ of $R$, there exists $1\le j\le l$ such that $\mu^{s} (\p,M/I^n M)=\varphi_j (n)$ for all $n\ge k$.
\end{thm}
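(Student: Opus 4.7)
The plan is to reduce Theorem \ref{finite poly bass local} to the complete local case, where Corollary \ref{finite poly bass acceptable} already delivers such a finite polynomial list. The reduction uses the Foxby--Takahashi formula (Lemma \ref{FT bass}) to transport Bass numbers across the completion map at each of the finitely many maximal ideals of $\bar{R}$, together with Corollary \ref{finite poly bass gor cor} to uniformly bound the formal-fiber coefficients that appear.

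First I dispose of primes $\p \notin \supp_R(\bar{R})$: for these one has $(M/I^n M)_\p = 0$ for every $n \ge 1$, so $\mu^s(\p, M/I^n M) = 0$, and the zero polynomial handles them. The remaining primes $\p$ are each contained in one of the finitely many maximal ideals $\m_1, \ldots, \m_t$ of $R$ containing $I + \ann_R(M)$. Fix such an $\m_i$ and set $\hat{R}_i = \widehat{R_{\m_i}}$, $\hat{M}_i = M_{\m_i} \otimes_{R_{\m_i}} \hat{R}_i$. Since $\hat{R}_i$ is a complete local ring, it is excellent and hence acceptable, and of finite Krull dimension. Applying Corollary \ref{finite poly bass acceptable} to $\hat{R}_i$, $I\hat{R}_i$, $\hat{M}_i$ once for each index $0, 1, \ldots, s$ produces a finite set $\Phi_i \subset \Q[x]$ and an integer $k_i > 0$ such that for every prime $\q$ of $\hat{R}_i$ and every $0 \le s' \le s$, $\mu^{s'}_{\hat{R}_i}(\q, \hat{M}_i/I^n \hat{M}_i)$ coincides, for all $n \ge k_i$, with some polynomial in $\Phi_i$ evaluated at $n$. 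Applying Corollary \ref{finite poly bass gor cor} to $R_{\m_i}$ for the same indices yields an integer $N > 0$ bounding $\mu^{j+\depth \hat{R}_{i,\q}/\p\hat{R}_{i,\q}}_{\hat{R}_i/\p\hat{R}_i}(\q/\p\hat{R}_i, \hat{R}_i/\p\hat{R}_i) \le N$ uniformly in $j \in \{0,\ldots,s\}$, in $\q \in \spec\hat{R}_i$, and in $\p = \q \cap R_{\m_i}$.

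For each $\p \subseteq \m_i$ I choose $\q$ to be a minimal prime of $\p\hat{R}_i$; then $\hat{R}_{i,\q}/\p\hat{R}_{i,\q}$ is Artinian, so $d := \depth \hat{R}_{i,\q}/\p\hat{R}_{i,\q} = 0$ and in particular $c^{(0)}_\p := \mu^0_{\hat{R}_i/\p\hat{R}_i}(\q/\p\hat{R}_i, \hat{R}_i/\p\hat{R}_i) \in \{1, \ldots, N\}$. The Foxby--Takahashi formula applied to the faithfully flat extension $R_{\m_i} \to \hat{R}_i$ then reads
$$\mu^{s'}_{\hat{R}_i}(\q, \hat{M}_i/I^n \hat{M}_i) = \sum_{j=0}^{s'} \mu^{s'-j}_R(\p, M/I^n M)\, c^{(j)}_\p,$$
where $c^{(j)}_\p := \mu^j_{\hat{R}_i/\p\hat{R}_i}(\q/\p\hat{R}_i, \hat{R}_i/\p\hat{R}_i) \in \{0,1,\ldots,N\}$. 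I then induct on $s'$ from $0$ to $s$: isolating the $j=0$ summand and dividing by $c^{(0)}_\p$ expresses $\mu^{s'}_R(\p, M/I^n M)$ as a $\Q$-linear combination of the left-hand side (which lies in $\Phi_i$) and of the quantities $\mu^{s'-j}_R(\p, M/I^n M)$ with $j \ge 1$ (which, by the inductive hypothesis applied uniformly in $\p$, each lie in a previously constructed finite subset of $\Q[x]$), with coefficients drawn from $\{0, 1, \ldots, N\}$. Only finitely many such combinations occur, so $\mu^{s'}_R(\p, M/I^n M)$ lies in a finite subset $\Psi_i \subset \Q[x]$ for all $n \ge k_i$. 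Taking $\bigcup_i \Psi_i \cup \{0\}$ and $k = \max_i k_i$ yields the required list.

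The main obstacle is organizing the induction so that the finite polynomial list is uniform in $\p$. This hinges on three independent ``finiteness of inputs'' facts: the left-hand sides of the Foxby--Takahashi equations form a finite set (Corollary \ref{finite poly bass acceptable}), the formal-fiber coefficients $c^{(j)}_\p$ are uniformly bounded integers (Corollary \ref{finite poly bass gor cor}), and the lower-index Bass numbers over $R$ already lie, by induction, in a finite set. Once these are in place, the linearity of the Foxby--Takahashi recursion and the invertibility of $c^{(0)}_\p$---guaranteed by the depth-$0$ formal fiber coming from $\q$ minimal over $\p\hat{R}_i$---close the argument.
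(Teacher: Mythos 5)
Your proposal is correct and follows essentially the same route as the paper: reduce to the localizations $R_{\m_i}$ at the finitely many maximal ideals over $I+\ann_R(M)$, apply Corollary \ref{finite poly bass acceptable} to the complete local rings $\hat{R}_i$, use Corollary \ref{finite poly bass gor cor} for the uniform bound $N$ on the formal-fiber coefficients, pick $\q$ minimal over $\p\hat{R}_i$ so that the $j=0$ coefficient is nonzero, and then solve the Foxby--Takahashi recursion inductively in the degree. The paper phrases the induction as an external induction on the theorem parameter $s$ rather than your internal induction over $s'$, and accordingly invokes Corollary \ref{finite poly bass acceptable} only at level $s$ (using the inductive hypothesis over $R$ for the lower levels), but this is a cosmetic reorganization of the identical argument.
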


\begin{proof}
Since $\mu^{s} (\p,M/I^n M)=0$ for all $\p\in\D_R(I+\operatorname{Ann}_R (M))$ and all $n>0$, and $R/(I+\operatorname{Ann}_R (M))$ is semi-local, in order to prove this theorem, we may replace $R$ by $R_{\m}$ for each maximal ideal of $R$ containing $I+\operatorname{Ann}_R (M)$.
We prove the theorem by induction on $s$.
It follows from the induction hypothesis and Corollaries \ref{finite poly bass acceptable} and \ref{finite poly bass gor cor} that there exist polynomials $\varphi_1, \ldots, \varphi_l, \phi_1, \ldots, \phi_m \in\Q[x]$ and integers $k\ge k_{\hat{R}}(I\hat{R}, \hat{M})$ and $N>0$ such that the following conditions are satisfied.
\begin{enumerate}[\rm(1)]
\item For any $\p\in\spec(R)$ and any integer $0\le p\le s-1$, there exists $1\le j\le l$ such that for all $n\ge k$, $\mu_R^p (\p,M/I^n M)=\varphi_j (n)$.
\item For any $\q\in\spec(\hat{R})$, there exists $1\le i\le m$ such that 
$\mu_{\hat{R}}^{s} (\q,\hat{M}/I^n \hat{M})=\phi_i (n)$ for all $n\ge k$.
\item $\mu_{\hat{R}/\p \hat{R}}^{q} (\q/\p\hat{R},\hat{R}/\p\hat{R})\le N$ for all prime ideals $\q$ of $\hat{R}$, $\p=\q\cap R$, and all integers $0\le q\le s$.
\end{enumerate}
Then the subset
$$
X:=\left\{ \frac{1}{d_0} \left(\phi_i-\sum_{q=1}^{s} d_{q} \varphi_{j(q)} \right) \middle\vert \ 1\le i\le m,\ d_0\ne 0,\ 0\le d_q\le N,\ 1\le j(q)\le l {\rm \ for \ any \ } 1\le q\le s 
\right\}
$$
of $\Q[x]$ is finite set.
Fix a prime ideal $\p$ of $R$.
We claim that there exists $\Phi\in X$ such that for all $n\ge k$, $\mu^{s} (\p,M/I^n M)=\Phi (n)$.
Since $\hat{R}$ is faithfully flat over $R$, there is a prime ideal $\q$ of $\hat{R}$ such that $\p=\q\cap R$ and $\q$ is a minimal prime ideal of $\p\hat{R}$.
Note that $\depth(\hat{R}/\p\hat{R})_\q=0$.
Lemma \ref{FT bass} yields that the equality
$$
\mu^{s} (\p,M/I^n M)=\frac{1}{\mu_{\hat{R}/\p \hat{R}}^{0} (\q/\p\hat{R},\hat{R}/\p\hat{R})}
\left(\mu_{\hat{R}}^{s} (\q,\hat{M}/I^n \hat{M})-
\sum_{q=1}^{s} 
\mu_{\hat{R}/\p \hat{R}}^{q} (\q/\p\hat{R},\hat{R}/\p\hat{R})
\mu^{s-q} (\p,M/I^n M)
\right)
$$
holds for any $n>0$.
The claim follows from the conditions (1), (2) and (3).
\end{proof}

The following result is a corollary of the above theorem.

\begin{cor}\label{main cor of fid local}
Suppose that $R/(I+\operatorname{Ann}_R (M))$ is semi-local.
Then there is an integer $k>0$ such that
$$
\id_{R_\p} (M/I^n M)_\p=\id_{R_\p} (M/I^k M)_\p
$$ 
for all integers $n\ge k$ and all prime ideals $\p$ of $R$.
\end{cor}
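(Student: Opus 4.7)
The plan is to reduce to the case where $R$ is local, and then combine Theorem \ref{finite poly bass local}, applied at a single Bass number index exceeding $\dim R$, with the characterization of infinite injective dimension already exploited in the proof of Proposition \ref{homog comp fid locus}.

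First, if a prime $\p$ of $R$ does not contain $I + \ann_R M$, then $(M/I^n M)_\p = 0$ for every $n > 0$, and the desired equality is trivial at $\p$. Because $R/(I + \ann_R M)$ is semi-local, only finitely many maximal ideals $\m$ of $R$ contain $I + \ann_R M$. So, as in the reduction at the start of the proof of Theorem \ref{finite poly bass local}, it suffices to prove the statement for $R$ replaced by each such $R_\m$ and take the maximum of the resulting integers $k$. Thus I may assume $(R,\m)$ is local of Krull dimension $d$, which is necessarily finite.

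Next, invoke Theorem \ref{finite poly bass local} at the single exponent $s = d + 1$. This yields finitely many polynomials $\varphi_1, \dots, \varphi_l \in \Q[x]$ and an integer $k > 0$ such that for every prime $\p$ of $R$ there is $1 \le j \le l$ with $\mu^{d+1}(\p, M/I^n M) = \varphi_j(n)$ for all $n \ge k$. Since each $\varphi_j$ is either identically zero or eventually positive, the truth value of $\mu^{d+1}(\p, M/I^n M) > 0$ is the same for every $n \ge k$.

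Since $d + 1 > \height \p$ for all $\p \in \spec R$, the same argument as in the proof of Proposition \ref{homog comp fid locus} (Lemma \ref{id sum}(2) combined with \cite[Theorem 1.1]{FFGR} and \cite[Theorem 2]{Rob}) gives the equivalence
\[
\id_{R_\p}(M/I^n M)_\p = \infty \ \Longleftrightarrow\ \mu^{d+1}(\p, M/I^n M) > 0.
\]
Together with the preceding paragraph, this shows that for each $\p$ the condition $\id_{R_\p}(M/I^n M)_\p = \infty$ is constant in $n \ge k$; and when it fails, Lemma \ref{id sum}(2) forces $\id_{R_\p}(M/I^n M)_\p = \depth R_\p$, a value independent of $n$. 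Hence $\id_{R_\p}(M/I^n M)_\p$ is constant in $n \ge k$, uniformly in $\p$. The main obstacle is that Theorem \ref{finite poly bass local} controls one Bass number index at a time whereas injective dimension is a supremum over all indices; the no-gap-type criterion at $s = d + 1$ is what allows the reduction to a single $s$.
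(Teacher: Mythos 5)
Your proof is correct and takes essentially the same approach as the paper: apply Theorem~\ref{finite poly bass local} at a single Bass index $s$ exceeding every relevant $\depth R_\p$, use the \cite{FFGR}/\cite{Rob} nonvanishing criterion to convert a single Bass number into the finiteness of injective dimension, and finish with Lemma~\ref{id sum}(2). The paper avoids the preliminary reduction to the local case by working directly with $s = \max\{\height\p \mid \p\in\V(I+\operatorname{Ann}_R M)\}+1$ (finite by the semi-local hypothesis), but this is only a cosmetic difference.
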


\begin{proof}
Applying Theorem \ref{finite poly bass local} to $s={\rm max}
\{\height\p \mid \p\in\V(I+\operatorname{Ann}_R (M)) \}+1$, it is seen that there exist polynomials $\varphi_0, \varphi_1, \ldots, \varphi_l \in\Q[x]$ and $k>0$ such that the following conditions are satisfied:
\begin{enumerate}[\rm(1)]
\item  $\varphi_0$ is the zero polynomial, and $\varphi_j (n)>0$ for any $1\le j\le l$ and any $n\ge k$.
\item For any prime ideal $\p$ of $R$, there is $0\le j(\p)\le l$ such that for all $n\ge k$, $\mu_R^s (\p,M/I^n M)=\varphi_{j(\p)} (n)$.
\end{enumerate}
Let $\p$ be a prime ideal of $R$.
If $\p\in\D_R(I+\operatorname{Ann}_R (M))$, then $(M/I^n M)_\p$ are zero for all $n>0$.
Otherwise, we have $\depth R_\p\le \height\p<s$.
It follows from \cite[Theorem 1.1]{FFGR} and \cite[Theorem 2]{Rob} that for any $n\ge k,$ $\id_{R_\p} (M/I^n M)_\p<\infty$ if and only if $j(\p)=0$.
The proof is now completed; see Lemma \ref{id sum}(2).
\end{proof}

Two simple examples of the asymptotic behavior of injective dimensions are presented.

\begin{ex}
Let $R=K \llbracket x, y, z, w \rrbracket/(xy-zw)$ be a quotient of a formal power series ring over a field $K$.
Take the ideal $I=xR$ of $R$ and the finitely generated $R$-module $M=R/wR$.
The ring $R$ is a local hypersurface of dimension 3 that has an isolated singularity.
Let $\p\in\supp_R(M)\cap\V_R(I)$.
If $\p$ is not a maximal ideal, then the $R_\p$ is regular.
We get $\id_{R_\p} (M/I^n M)_\p=\depth R_\p$ for all $n>0$.
On the other hand, for any $n>0$, the minimal free resolution of the $R$-module $M/I^n M\simeq R/(w, x^n)R$ is
$$
\cdots \xrightarrow{
\begin{pmatrix}
  y & z \\
  w & x
\end{pmatrix}
} R^2 \xrightarrow{
\begin{pmatrix}
  x & -z \\
  -w & y
\end{pmatrix}
} R^2 \xrightarrow{
\begin{pmatrix}
  y & z \\
  w & x
\end{pmatrix}
} R^2 \xrightarrow{
\begin{pmatrix}
  x^n & zx^{n-1} \\
  -w & y
\end{pmatrix}
} R^2 \xrightarrow{
\begin{pmatrix}
  w & x^n
\end{pmatrix}
} R \to 0.
$$
For all $n>0$, we have $\pd_{R} (M/I^n M)=\infty$, which means that $\id_{R} (M/I^n M)=\infty$ since $R$ is Gorenstein.
This says that the integer $k=1$ satisfies the assertion of Corollary \ref{main cor of fid local}.
\end{ex}

\begin{ex}
Let $R=K [x, y, z]/(x^m y, x^m z)$ be a quotient of a polynomial ring over a field $K$, where $m>0$.
Take the ideal $I=xR$ of $R$ and the finitely generated $R$-module $M=R$.
Let $\p\in\V_R(I)$.
The ring $R_\p$ is not Cohen--Macaulay if $\p=(x,y,z)R$.
Hence, we obtain $\id_{R_\p} (M/I^n M)_\p=\infty$ for all $n>0$.
Suppose $\p\ne (x,y,z)R$.
Put $S=K [x, y, z]$ and $\p=\q/(x^m y, x^m z)$ for some prime ideal $\q$ of $S$.
Since $(x^m y, x^m z)S_\q=x^m S_\q$, we see that
$$
R_\p\simeq S_\q/x^m S_\q, \quad
(M/I^{n}M)_\p\simeq R_\p/x^n R_\p\simeq
\begin{cases}
        {S_\q/x^n S_\q \quad (n<m)}\\
        {S_\q/x^m S_\q \quad (n\ge m)}.
\end{cases}
$$
As $R_\p$ is Gorenstein, we have equalities
$$
\pd_{R_\p} (M/I^n M)_\p=
\begin{cases}
        {\infty \quad (n<m)}\\
        {0 \quad (n\ge m)}
\end{cases}
{\rm and} \quad
\id_{R_\p} (M/I^n M)_\p=
\begin{cases}
        {\infty \quad (n<m)}\\
        {\depth R_\p \quad (n\ge m)}.
\end{cases}
$$
This says that the integer $k=m$ satisfies the assertion of Corollary \ref{main cor of fid}.
\end{ex}

The following result is a Betti number version of Theorem \ref{finite poly bass local}.
The proof is a dual of the proof of Proposition \ref{finite poly bass gor}.
Note the difference in assumptions and subscripts/superscripts between (1) and (2) in Corollary \ref{open bass and betti}.
Also, \cite[Theorem 3.5]{Ki} is not necessary for the proof of Theorem \ref{finite poly betti}.

\begin{thm}\label{finite poly betti}
Let $s\ge 0$ be an integer.
Then there exist polynomials $\varphi_1, \ldots, \varphi_l \in\Q[x]$ and $k>0$ such that the following condition is satisfied:
for any prime ideal $\p$ of $R$, there exists $1\le j\le l$ such that $\beta_{s} (\p,M/I^n M)=\varphi_j (n)$ for all $n\ge k$.
\end{thm}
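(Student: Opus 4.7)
The plan is to imitate the proof of Proposition \ref{finite poly bass gor}, which becomes considerably simpler in the Betti setting because Corollary \ref{open bass and betti}(2) requires no Gorenstein hypothesis and involves no shift between the homological indices of $\p$ and $\q$. For the same reason no catenarity, finite dimension, or semi-locality of $R$ is needed, so the Betti statement can be proved unconditionally.

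By the noetherianity of $\spec(R)$ it suffices to establish the following claim.

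\textbf{Claim.} For every open subset $X \subsetneq \spec(R)$ there exist an open subset $Y$ with $X \subsetneq Y$, a polynomial $\varphi \in \Q[x]$, and an integer $k > 0$ such that $\beta_s(\q,M/I^n M) = \varphi(n)$ for all $n \ge k$ and all $\q \in Y \setminus X$.

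Once the claim is established, iteration yields a strictly ascending chain of opens $X_0 := \emptyset \subsetneq X_1 \subsetneq \cdots \subsetneq X_l = \spec(R)$, polynomials $\varphi_1,\ldots,\varphi_l \in \Q[x]$, and integers $k_1,\ldots,k_l$; setting $k = \max\{k_j\}$ then gives the theorem, because every prime $\p$ lies in some unique $X_j \setminus X_{j-1}$ on which $\beta_s(\p,M/I^n M) = \varphi_j(n)$ for all $n \ge k$.

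To prove the claim, write $X = \D(J)$ with $J \ne R$ and pick a minimal prime $\p$ of $J$. By the Tor analogue of \cite[Corollary 7]{Ko} (the polynomial-growth result for Betti numbers at a fixed prime, cited alongside the Bass version in the Introduction), there exist $\varphi \in \Q[x]$ and $k > 0$ with $\beta_s(\p, M/I^n M) = \varphi(n)$ for all $n \ge k$. Corollary \ref{open bass and betti}(2) produces an open neighborhood $U$ of $\p$ on which $\beta_s(\q, M/I^n M) = \beta_s(\p, M/I^n M)$ for every $\q \in U \cap \V(\p)$ and every $n > 0$. Choosing $f \in R \setminus \p$ with $\sqrt{J R_f} = \p R_f$ as in \cite[Lemma 2.7(4)]{K} and setting $Y = X \cup (\D(f) \cap U)$, one obtains an open set with $\p \in Y \setminus X$, hence $X \subsetneq Y$, and
\[
Y \setminus X = \V(J) \cap \D(f) \cap U = \V(\p) \cap \D(f) \cap U \subseteq U \cap \V(\p),
\]
so $\beta_s(\q, M/I^n M) = \beta_s(\p, M/I^n M) = \varphi(n)$ for every $\q \in Y \setminus X$ and every $n \ge k$, as required.

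No step poses a genuine obstacle: the whole argument is a direct dualization of Proposition \ref{finite poly bass gor}, and the unconditional nature of Corollary \ref{open bass and betti}(2) (absence of a height shift and of Gorenstein hypotheses) is precisely what eliminates both the auxiliary element $g$ and the catenary assumption that appeared in the Bass-number version.
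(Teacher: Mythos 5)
Your proof is correct and follows exactly the route the paper intends: the paper itself states that Theorem~\ref{finite poly betti} is proved by dualizing Proposition~\ref{finite poly bass gor}, using Corollary~\ref{open bass and betti}(2) in place of (1), and you carry out precisely this dualization, correctly observing that the absence of a Gorenstein hypothesis and of a height shift in Corollary~\ref{open bass and betti}(2) is what lets you drop the auxiliary element $g$ and the catenarity, finite-dimensionality, and support restrictions that appeared in the Bass-number version.
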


For any integer $i\ge 0$, we set $\PD_i^R (M) = \{\p\in\spec (R)\mid \pd_{R_\p}M_\p\le i \}$.
It is known that for any $i\ge 0$, $\PD_i^R (M)$ is an open subset of $\spec (R)$.
There is a projective dimension version of Corollary \ref{main cor of fid local}.

\begin{cor}\label{main thm of pd}
There is an integer $k>0$ such that for all integers $n\ge k$ and all prime ideals $\p$ of $R$,
$$
\pd_{R_\p} (M/I^n M)_\p=\pd_{R_\p} (M/I^k M)_\p.
$$ 
\end{cor}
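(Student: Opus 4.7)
The plan is to adapt the argument of Corollary~\ref{main cor of fid local}, replacing the semi-local hypothesis by a noetherian exhaustion based on the openness of each locus $\PD_s^R(-)$. For every integer $s \ge 0$, apply Theorem~\ref{finite poly betti} to obtain polynomials $\varphi_{s,1}, \ldots, \varphi_{s, l_s} \in \Q[x]$ and an integer $k_s > 0$ such that for each prime $\p$, $\beta_s(\p, M/I^n M) = \varphi_{s, j_s(\p)}(n)$ for all $n \ge k_s$; after enlarging each $k_s$, one may assume every nonzero $\varphi_{s,j}$ is strictly positive on $[k_s, \infty)$. Consequently, for every $n \ge k_{s+1}$, the open set $\PD_s^R(M/I^n M)$ coincides with the fixed open subset $X_s = \{\p \in \spec R : \varphi_{s+1, j_{s+1}(\p)} \equiv 0\}$. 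Since $\pd \le s$ implies $\pd \le s+1$, the chain $X_0 \subseteq X_1 \subseteq \cdots$ in the noetherian space $\spec R$ stabilizes at some index $s_0$ with $X_s = X_{s_0}$ for all $s \ge s_0$.

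Let $C = \spec R \setminus X_{s_0}$ and let $\q_1, \ldots, \q_r$ be its finitely many minimal elements. For each $i$, the fact that $\q_i \notin X_s$ for every $s$ (for $s \le s_0$ from $X_s \subseteq X_{s_0}$, for $s \ge s_0$ from the stabilization) forces $\varphi_{s+1, j_{s+1}(\q_i)} \not\equiv 0$ for all $s$, so $\beta_{s+1}(\q_i, M/I^n M) > 0$ whenever $n \ge k_{s+1}$. In particular $\pd_{R_{\q_i}}(M/I^n M)_{\q_i}$ is unbounded as $n \to \infty$. Combining this with the projective-dimension analog of \cite[Corollary~8]{Ko} (asymptotic stability of $\pd$ at the fixed local ring $R_{\q_i}$, alluded to at the end of the introduction) yields an integer $k_i^\ast > 0$ with $\pd_{R_{\q_i}}(M/I^n M)_{\q_i} = \infty$ for all $n \ge k_i^\ast$.

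Set $k = \max(k_1, \ldots, k_{s_0+1}, k_1^\ast, \ldots, k_r^\ast)$. For any $n \ge k$ and any prime $\p$: if $\p \in X_{s_0}$, then $\pd_{R_\p}(M/I^n M)_\p$ is determined by which of the nested open sets $X_0 \subseteq \cdots \subseteq X_{s_0}$ contain $\p$ and hence depends only on $\p$, not on $n$; if $\p \in C$, pick some minimal $\q_i \subseteq \p$ in $C$ and conclude $\pd_{R_\p}(M/I^n M)_\p \ge \pd_{R_{\q_i}}(M/I^n M)_{\q_i} = \infty$ using that projective dimension is non-decreasing under specialization (localization of a free resolution preserves exactness). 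The main obstacle is securing the single-prime $\pd$-stability statement used to produce each $k_i^\ast$; should a direct reference be unavailable, it can be recovered by combining Brodmann's asymptotic stability of depth with the Auslander--Buchsbaum formula on the subsequence of $n$ where pd at $\q_i$ is finite, in tandem with the polynomial control of $\beta_s$ afforded by Theorem~\ref{finite poly betti}.
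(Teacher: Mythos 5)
Your proof is correct, and it reaches the conclusion by a route that is similar in spirit to the paper's but genuinely different in its execution. Both arguments start from Theorem~\ref{finite poly betti}: since $\pd_{R_\p}(M/I^n M)_\p\le s$ if and only if $\beta_{s+1}(\p,M/I^n M)=0$, the polynomial description of $\beta_{s+1}$ shows that for $n$ past a threshold $k_{s+1}$ the locus $\PD_s^R(M/I^n M)$ equals a fixed open set $X_s$, and the chain $X_0\subseteq X_1\subseteq\cdots$ stabilizes at some $X_{s_0}$. From this point the paper packages the data into the graded modules $N_t=\bigoplus_{n\ge t}M/I^n M$, stabilizes the two-parameter family $\PD_i^R(N_t)$ via \cite[Lemma 2.8]{Ki}, invokes \cite[Corollary 8]{Ko} at the minimal primes of the complements, and finishes with the Unified Lemma~\ref{Unified lemma}. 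You bypass both the $N_t$ bookkeeping and the Unified Lemma: for $\p\in X_{s_0}$ the eventual projective dimension is simply the least $s$ with $\p\in X_s$ (the zero-module case being constant in $n$ because $\supp(M/I^nM)=\supp M\cap\V(I)$ is independent of $n$), uniformly once $n\ge\max(k_1,\dots,k_{s_0+1})$; for $\p\notin X_{s_0}$ you specialize from one of the finitely many minimal primes $\q_i$ of the complement and argue that $\pd_{R_{\q_i}}(M/I^n M)_{\q_i}=\infty$ for large $n$. Your justification of this last point is more roundabout than necessary: rather than combining ``unboundedness'' with \cite[Corollary 8]{Ko} (or Brodmann plus Auslander--Buchsbaum on a subsequence), it suffices to note that $\beta_{d_i+1}(\q_i,M/I^n M)>0$ for $n\ge k_{d_i+2}$, where $d_i=\dim R_{\q_i}$, forces $\pd_{R_{\q_i}}(M/I^n M)_{\q_i}>d_i$ and hence $=\infty$ by Auslander--Buchsbaum, with no asymptotic-stability input at $\q_i$ at all. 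The net effect is that your argument trades the paper's Unified Lemma machinery for a direct case analysis and is, if anything, slightly more self-contained.
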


\begin{proof}
For any $t>0$, we put $N_t=\bigoplus_{n\ge t} M/I^n M$.
Applying Theorem \ref{finite poly betti} to $s=i+1$ for each $i\ge 0$, an analogous argument to the proof of Corollary \ref{main cor of fid local} shows that there exists $k_i>0$ such that $\PD_i^R (M/I^n M)=\PD_i^R (M/I^{k_i} M)$ for all integers $n\ge k_i$.
The subset
$$
\PD_i^R (N_t)=\bigcap_{n\ge t}\PD_i^R (M/I^n M)=\bigcap_{n=t}^{{\rm max}\{t, k_i\}} \PD_i^R (M/I^n M)
$$
of $\spec(R)$ is open for any $t>0$ and $i\ge 0$.
Then $\PD_i^R (N_t)$ is contained in both $\PD_i^R (N_{t+1})$ and $\PD_{i+1}^R (N_t)$ for all $t>0$ and all $i\ge0$.
Since $R$ is noetherian, there are integers $k'>0$ and $m\ge 0$ such that the following conditions are satisfied; see \cite[Lemma 2.8]{Ki} for instance.
\begin{enumerate}[\rm(1)]
\item For any $t\ge k'$ and $i\ge 0$, the equality $\PD_i^R (N_t)=\PD_i^R (N_{k'})$ holds.
\item For any $i\ge m$, the equality $\PD_i^R (N_{k'})=\PD_m^R (N_{k'})$ holds.
\end{enumerate}
By \cite[Corollary 8]{Ko}, we can choose $k\ge k'$ such that for all $n\ge k$, all $0\le i\le m$, and all minimal elements $\q$ of $\spec(R)\setminus \PD_i^R (N_{k'})$, the equalities $\pd_{R_\q} (M/I^n M)_\q=\pd_{R_\q} (M/I^k M)_\q$ hold.
For all $i\ge 0$, applying Lemma \ref{Unified lemma}, we see that the equalities $\PD_i^R (N_k)=\PD_i^R (M/I^n M)$ hold for all $n\ge k$.
\end{proof}

\begin{ac}
The author would like to thank his supervisor Ryo Takahashi for valuable comments.
The auther is also indebted to Souvik Dey for pointing out that the previous version of this paper contains an already known result, which is removed in the current version.
\end{ac}

\end{document}